\let\theoremstyle\relax
\newtheorem{theorem}{Theorem}
\newtheorem{lemma}{Lemma}
\newtheorem{corollary}{Corollary}
\newtheorem{assumption}{Assumption}
\theoremstyle{definition}
\newtheorem{problem}{Problem}
\newtheorem{remark}{Remark}
\DeclareMathOperator*{\T}{\mathsf{T}}
\DeclareMathOperator*{\U}{\mathcal{U}}
\DeclareMathOperator*{\X}{\mathcal{X}}
\newcommand{\vbar}{\overline{v}}
\newcommand{\ubar}{\overline{u}}
\newcommand{\tbar}{\underline{t}}
\begin{document}

\begin{frontmatter}

\title{Ignore Drift, Embrace Simplicity: Constrained Nonlinear Control through Driftless Approximation\thanksref{footnoteinfo}} 

\thanks[footnoteinfo]{This work was supported in part by the Air Force Office of Scientific Research under Grant FA9550-23-1-0131; and in part by the NASA University Leadership Initiative under Grant 80NSSC22M0070. \emph{(Corresponding Author: Ram Padmanabhan.)}}

\author{Ram Padmanabhan}\ead{ramp3@illinois.edu} ~and    
\author{Melkior Ornik}\ead{mornik@illinois.edu}               

\address{University of Illinois Urbana-Champaign, Urbana, IL 61801, USA.}       

\begin{keyword}                           
Nonlinear control, constrained systems, driftless systems.             
\end{keyword}                             

\begin{abstract}                          
We present a novel technique to drive a nonlinear system to reach a target state under input constraints. The proposed controller consists only of piecewise constant inputs, generated from a simple linear driftless approximation to the original nonlinear system. First, we construct this approximation using only the effect of the control input at the initial state. Next, we partition the time horizon into successively shorter intervals and show that optimal controllers for the linear driftless system result in a bounded error from a specified target state in the nonlinear system. We also derive conditions under which the input constraint is guaranteed to be satisfied. On applying the optimal control inputs, we show that the error monotonically converges to zero as the intervals become successively shorter, thus achieving arbitrary closeness to the target state with time. {Using simulation examples on classical nonlinear systems, we illustrate how the presented technique is used to reach a target state while still satisfying input constraints. In particular, we show that our method completes the task even when assumptions of the underlying theory are violated.}
\end{abstract}

\end{frontmatter}

\section{Introduction}
The control of nonlinear dynamical systems presents a number of analytical and computational challenges. Unlike linear systems where, under the assumption of stabilizability \cite{Hespanha}, linear state feedback is guaranteed to achieve asymptotic stability, such a simple controller does not exist for nonlinear systems. These challenges are compounded in scenarios where exact mathematical models for the nonlinear dynamics are unavailable or when practical actuation constraints are taken into account. Feedback linearization and backstepping methods \cite{Khalil_NS} generally require accurate mathematical models, while techniques such as sliding mode control \cite{SMC1, SMC2} can suffer from poor performance and do not directly account for input constraints. More complex methods such as neural network-based control \cite{NN1, NN2} and nonlinear model predictive control \cite{NMPC} require high computational effort.

In this paper, we present a method to control nonlinear systems {with input constraints.} This method is based on a simple linear driftless approximation to the original nonlinear dynamics, constructed using only the effect of the control input at the initial state. By considering a sequence of successively shorter time horizons, we use optimal control laws --- which are easily derived --- for the linear driftless approximation over these horizons. We demonstrate how applying these inputs to the original nonlinear system can ensure any arbitrarily small neighborhood of a target state is achieved. Our work is close in spirit to a number of nonlinear control approaches based on piecewise approximations, and we briefly discuss some of these below.

\subsection{Related Work}
There are a number of methods to drive a nonlinear system to a target state based on piecewise inputs which may use local approximations. Among the earliest efforts in this direction is the work of Oaks and Cook \cite{OC76}, where a Taylor series expansion approximates nonlinear dynamics locally using linear functions. Sontag \cite{Sontag81} presents a number of results in controller and observer design for nonlinear systems using piecewise linear maps. Godhavn \emph{et al.} \cite{GBCS97} describe how a target state can be achieved through a sequence of piecewise constant, bang-bang inputs for a class of nonholonomic systems. Similarly, Luo \emph{et al.} \cite{LT98} derive a recursive algorithm to construct exponentially convergent control laws for a particular class of nonlinear systems. Piecewise nonlinear controllers are presented by Ohtake \emph{et al.} \cite{OTW03} for Takagi-Sugeno fuzzy models constructed from nonlinear dynamics. Piecewise constant controllers and linear models have also been used in the context of nonlinear model predictive control \cite{MPC1, MPC2}. Similar approaches have been proposed to control nonlinear systems without drift, either using piecewise continuous inputs \cite{CS92, GC18} or approximated dynamics \cite{RTM95, MS99}.

While these relatively older approaches influence our proposed method, there also exists recent work in using piecewise inputs or local approximations. Piecewise-affine abstractions of nonlinear stochastic systems are developed in \cite{HWH22}, with similar abstractions used for robust control of nonlinear Markov jump systems in \cite{Zhu22}. Local linear approximations have been used for model predictive control of nonlinear systems \cite{BKMA22}, and piecewise constant feedback control has been used to stabilize time-varying systems \cite{Tan16}. There also exist nonlinear model predictive control algorithms that use linear or piecewise-affine approximations \cite{MPC1, BKMA22, BKMA22b}, and efficient solvers for the same \cite{Casadi, Acados}.

A number of approaches described here do not consider actuation constraints. Among those that do, including \cite{Sontag81, GBCS97, BKMA22, BKMA22b, MPC2}, piecewise inputs are generally designed based on linear model approximations, inducing drift in system dynamics. {Further, the nonlinear MPC approaches described above may still suffer from higher computational burden due to repeatedly solving an optimization problem.} Our approach in this paper uses a linear driftless model instead, resulting in a much simpler approximation {using only the effect of the input} at the initial state. In particular, it is more straightforward to design optimal control signals under input constraints for this model, as shown in \cite{PO25a}.

\subsection{Contributions}
We consider the problem of driving a constrained nonlinear system to a target state over an infinite horizon. {To solve this problem using a simple controller with piecewise constant inputs, we consider a linear driftless approximation of the original nonlinear system using only the effect of the input at the initial state.} We then design a sequence of time instants that diverges to infinity, while the difference between successive terms converges monotonically to zero, thus partitioning the time horizon into successively shorter time intervals. We show that the control signal with minimum energy in each interval for the linear driftless approximation leads to a bounded error from the target state in the original nonlinear system. Simultaneously, we develop conditions on a parameter governing the sequence which guarantee the input constraint is satisfied. Finally, we show that as the intervals become monotonically shorter, the error bound monotonically decreases to zero, thus reaching any arbitrarily small neighborhood of the target state. The use of this method is illustrated through a set of simulation examples on common nonlinear systems. Additionally, we show that satisfactory performance may be achieved even when some underlying assumptions are violated.

The remainder of this paper is organized as follows. In Section \ref{sec:Formulation}, we formulate the problem statement with basic assumptions, and present the notion of a linear driftless approximation. In Section \ref{sec:Method}, we describe our technique, including the partition of the time horizon and corresponding controllers in each interval. Our primary convergence results are presented in Section \ref{sec:Properties}, where we construct conditions which ensure input constraints are satisfied, and prove that the target state can be reached with arbitrary precision. Section~\ref{sec:Examples} presents a set of examples illustrating the technique. In Section \ref{sec:Discussion}, we provide additional remarks on the merits and drawbacks of this technique, as well as a number of avenues for future work.

\section{Problem Formulation} \label{sec:Formulation}
We consider {control-affine nonlinear dynamical systems evolving on a state space $\X \subseteq \mathbb{R}^N$}:
\begin{equation} \label{eq:System}
	{\dot{x}(t) = f(x(t)) + g(x(t))u(t), ~x(0) = x_0 \in \X}
\end{equation}
where $x(t) \in \X$ is the state and $u(t) \in \mathbb{R}^m$ is the control input, $f:\X \to \mathbb{R}^N$ is the function containing drift terms and $g:\X \to \mathbb{R}^{N\times m}$ is the input function. {We assume $f$ and $g$ are $D_f$- and $D_g$-Lipschitz continuous in the $\infty$-norm on the state space $\X$. In other words, for any $x_1, x_2 \in \X$, there exist constants $D_f$ and $D_g$ such that}
\begin{align}
f(x_1) - f(x_2) &= d_f(x_1, x_2), \nonumber \\
\text{where } \|d_f(x_1, x_2)\|_{\infty} &\leq D_f \|x_1 - x_2\|_{\infty}, \label{eq:f_cond} \\
\text{and }~~ g(x_1) - g(x_2) &= d_g(x_1, x_2), \nonumber \\
\text{where } \|d_g(x_1, x_2)\|_{\infty} &\leq D_g \|x_1 - x_2\|_{\infty}, \label{eq:g_cond}
\end{align}
representing growth rate constraints on the system dynamics. {We remark that by the finite subcover property of compact sets, local Lipschitz continuity is equivalent to Lipschitz continuity on $\X$ if $\X$ is compact \cite{Lipschitz}.} We also assume the input $u$ is constrained as
\begin{align}
u \in \U \coloneqq \bigl\{&u: [0, \infty) \to \mathbb{R}^{m}: u \text{ is piecewise continuous} \nonumber \\
&\text{in $t$, } \|u(t)\|_{\infty} \leq 1 \text{ for all $t$}\bigr\}, \label{eq:SetU}
\end{align}
representing practical actuator constraints in line with prior work \cite{KB60, PO25a}. The central problem we address in this paper is stated below. \vspace{0.25cm}

\begin{problem} \label{prob}
Given an initial state $x_0 \in \mathbb{R}^N$ and target state $x_{tg} \in \mathbb{R}^N$, design a control input $u \in \U$ which ensures that for any $\varepsilon > 0$, there exists a time $\tbar$ such that $\|x_{tg} - x(t)\|_{\infty} < \varepsilon$ for all $t \geq \tbar$.
\end{problem}
We remark that a systematic procedure to solve such a problem does not exist for nonlinear systems, unlike linear systems \cite{Hespanha}.
The approach we take in this paper uses a linear driftless approximation to system \eqref{eq:System}. Let $g_0 = g(x_0)$. Then, the dynamics
\begin{equation} \label{eq:Driftless}
	\dot{x}(t) = g_0u(t), ~x(0) = x_0,
\end{equation}
are termed the \emph{linear driftless approximation} to \eqref{eq:System}. We now make the following assumption for this approximation. \vspace{0.25cm}

\begin{assumption} \label{asm:ctrb}
The matrix $g_0$ has full row rank $N$.
\end{assumption}

Under this assumption, any target state $x_{tg}$ can be achieved starting from any initial state $x_0$ in the linear driftless system \eqref{eq:Driftless}, {thus guaranteeing controllability of \eqref{eq:Driftless}.} We note that Assumption \ref{asm:ctrb} is violated for systems with more states than inputs; however, we briefly discuss in Section~\ref{sec:Properties} and illustrate in Section \ref{sec:Examples} how satisfactory performance can still be achieved if this assumption is not satisfied. {Similarly, while \eqref{eq:f_cond} and \eqref{eq:g_cond} are strong requirements on the dynamics \eqref{eq:System}, we illustrate in Section \ref{sec:Examples} that Problem \ref{prob} can still be solved for systems violating these conditions.}

It is also worth noting here that the dynamics \eqref{eq:System} along with the constraint \eqref{eq:SetU} can be rewritten as $\dot{x} = \hat{u}$, $\hat{u} \in \hat{\U}(x)$ where $\hat{\U}(x) \coloneqq \{\hat{u}:\hat{u} = f(x) + g(x)u, ~u \in \U\}$ for each $x$. In other words, we have a simple integrator with state-dependent input constraints that are not necessarily convex. Controlling such systems is known to be difficult \cite{LCF22, CVOC} due to the complexity of these constraints.

In the following section, we describe our method to solve Problem \ref{prob} using controllers solving this problem for \eqref{eq:Driftless}.

\section{Method} \label{sec:Method}
In this section, we describe our solution to Problem \ref{prob}, which successively uses controllers based on the linear driftless dynamics \eqref{eq:Driftless}. We show that applying a controller with minimum energy for the system \eqref{eq:Driftless} leads to {bounded error from the target state} in original nonlinear system \eqref{eq:System}. This principle is used to apply such controllers over successively shorter time intervals, and is shown to eventually lead to asymptotic convergence of the state $x(t)$ to the target $x_{tg}$. {It is worth remarking that in comparison to linearization-based methods, the approximation \eqref{eq:Driftless} uses significantly less information about the dynamics \eqref{eq:System}. In particular, the effect of the drift term $f(x)$ is ignored. Nevertheless, by exploiting the Lipschitz continuity of $f(x)$ and $g(x)$ and recomputing the controller in each time interval, we are able to solve Problem~\ref{prob}.}

Consider the system \eqref{eq:Driftless} and pick a variable $t^* \in (0, \infty)$, behaving as a design parameter. Our first step is to design a controller which ensures $x(t^*) = x_{tg}$ in \eqref{eq:Driftless}. Writing the solution to this system at $t^*$, and setting $x(t^*) = x_{tg}$,
\begin{equation} \label{eq:DftSol}
	x(t^*) - x_0 = t^* g_0 \ubar = x_{tg}-x_0,
\end{equation}
where $\ubar = \frac{1}{t^*}\int_{0}^{t^*}u(t)\mathrm{d}t$ and we aim to design $u(t)$ to achieve the second part of the equality above. Under Assumption \ref{asm:ctrb}, it is easy to show that the constant control input
\begin{equation} \label{eq:u1}
	u_1(t) \equiv \frac{1}{t^*}g_{0}^{\dagger}(x_{tg}-x_0) \text{ for all $t \in [0, t^*]$}
\end{equation}
using the pseudoinverse $g_{0}^{\dagger}$ of $g_0$ will achieve $x(t^*) = x_{tg}$ in \eqref{eq:Driftless}. Indeed, it can be shown \cite{PO25a} that \eqref{eq:u1} is the control input with minimum energy that achieves $x(t^*) = x_{tg}$ in \eqref{eq:Driftless}.

We now investigate the response of the original nonlinear system \eqref{eq:System} to this constant control input. Substituting \eqref{eq:u1} in \eqref{eq:System}, using the property \eqref{eq:g_cond} and noting that $g_0 \int_{0}^{t^*}u_1(t)\mathrm{d}t = x_{tg}-x_0$,
$$
	x(t^*)-x_0 = x_{tg}-x_0 + \int_{0}^{t^*} (f(x(t)) + d_g(x(t), x_0)u_1(t))\mathrm{d}t
$$
so that
\begin{align}
	&x_{tg}-x(t^*) \nonumber \\
	&~= v(0,t^*) \coloneqq -\int_{0}^{t^*} \left(f(x(t)) + d_g(x(t), x_0)u_1(t)\right) \mathrm{d}t. \label{eq:NLSol1}
\end{align}
Thus, on applying the input \eqref{eq:u1} to the nonlinear system \eqref{eq:System}, the system state is guaranteed to be within $v(t^*)$ from the target state. We now exploit this fact to design a sequence of constant inputs of the form \eqref{eq:u1} that solves Problem \ref{prob}. 

\begin{figure}[!t]
	\centering
	\includegraphics[width = 0.47\textwidth]{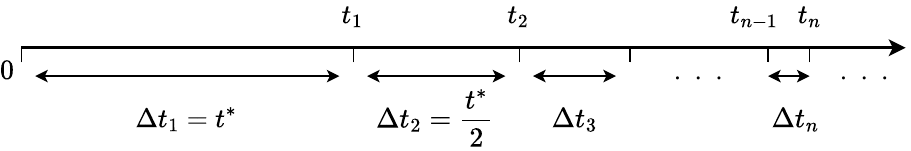}
	\caption{Partitioning the time horizon using the constructed sequence of time instants $\{t_n\}$.}
	\label{fig:Seq}
\end{figure}

Construct the infinite sequence of time instants $\{t_n\}$ defined by $t_0 = 0$ and $t_n = \sum_{k = 1}^{n} \frac{t^*}{k}$ for $n \geq 1$ as shown in Fig.~\ref{fig:Seq}. This sequence is the set of partial sums for the harmonic series scaled by $t^*$, the design variable. Further, let $\Delta t_n = t_{n}-t_{n-1} = \frac{t^*}{n}$. Clearly, $\lim_{n \to \infty} t_n = \infty$ and $\lim_{n \to \infty} \Delta t_n = 0$. Let $x_1 = x(t_1) = x(t^*)$ denote the state achieved on applying $u_1(t)$ in \eqref{eq:u1} over $t \in [0, t^*]$ to the system \eqref{eq:System}, so that \eqref{eq:NLSol1} reduces to
\begin{equation} \label{eq:NLSol11}
x_{tg} - x_1 = v(0,t_1).
\end{equation}
The key idea in our approach is to redesign a controller of the form \eqref{eq:u1} at each instant $t_n$ based on the state achieved at that instant. Thus, for $n \geq 2$, at $t_{n-1}$, we apply the controller
\begin{equation} \label{eq:un}
	u_n(t) \equiv \frac{1}{\Delta t_n} g_{0}^{\dagger} (x_{tg} - x_{n-1}) \text{ for all $t \in [t_{n-1}, t_n]$}.
\end{equation}
Substituting \eqref{eq:un} in \eqref{eq:System} over the horizon $[t_{n-1}, t_n]$, using the property \eqref{eq:g_cond} and noting that $g_0 \int_{t_{n-1}}^{t_n} u_n(t) \mathrm{d}t = x_{tg} - x_{n-1}$, 
\begin{align}
	x_{tg} - x_n &= v(t_{n-1},t_n) \nonumber \\
	&\coloneqq -\int_{t_{n-1}}^{t_n} \left(f(x(t)) + d_g(x(t), x_0)u_n(t)\right) \mathrm{d}t \label{eq:NLSoln}
\end{align}
where $x_n \coloneqq x(t_n)$ denotes the state achieved at $t_n$. Here, we use the solution of \eqref{eq:System} over the horizon $[t_{n-1}, t_n]$, and are guaranteed to be within $v(\Delta t_n)$ from $x_{tg}$ at $t_n$. This procedure is repeated successively, and in general for $n \geq 1$, the control \eqref{eq:un} guarantees \eqref{eq:NLSoln} in the nonlinear system \eqref{eq:System}.

%

We note here that the sequence of inputs \eqref{eq:un} uses only the linear driftless approximation \eqref{eq:Driftless}, i.e., solely the knowledge of the input function at the initial state. In the next section, we investigate the convergence properties of this method, proving that the sequence of inputs \eqref{eq:un} solves Problem \ref{prob}. We also develop conditions on $t^*$ that guarantee the input constraint \eqref{eq:SetU} is satisfied.

\section{Convergence and Boundedness Properties} \label{sec:Properties}
In this section, we prove that the system state converges asymptotically to the target state under the method described in Section \ref{sec:Method}. We first develop conditions on $t^*$ which guarantee that the constraint \eqref{eq:SetU} is satisfied, simultaneously showing that the error from $x_{tg}$ at each instant $t_n$ is bounded. Next, we exploit the fact that the error converges to zero with time, thus showing that Problem \ref{prob} is solved using the sequence of inputs \eqref{eq:un}. First, we present two lemmas that are used to prove later results in this section. \vspace{0.25cm}

\begin{lemma} \label{lem:vn}
Let $D_S = D_f + D_g$ and {$c \coloneqq \left\|f(x_0)\right\|_{\infty} + D_S\left\|x_{tg}-x_0\right\|_{\infty}$.} Consider the sequence
\begin{equation} \label{eq:vn}
\vbar_n \coloneqq \frac{c}{D_S}\left(e^{\Delta t_n D_S} - 1\right).
\end{equation}
Then, for all $n \geq 1$,
\begin{equation} \label{eq:vn_rec}
	\vbar_{n+1} \leq \frac{n}{n+1}\vbar_n,
\end{equation}
and hence $\vbar_n \leq \frac{1}{n}\vbar_1$ and $\lim_{n \to \infty} \vbar_n = 0$.
\end{lemma}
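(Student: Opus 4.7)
The plan is to reduce the claim to a single scalar inequality about the exponential function and then telescope. The starting observation is that $\Delta t_n = t^*/n$, so
\[
\Delta t_{n+1} = \frac{n}{n+1}\,\Delta t_n,
\]
and therefore $\vbar_{n+1}$ and $\vbar_n$ are both of the form $\frac{c}{D_S}(e^{y}-1)$ with arguments $y$ differing only by a factor $\alpha = n/(n+1) \in (0,1)$.

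The main step, then, is to establish the scalar inequality
\[
e^{\alpha z} - 1 \;\leq\; \alpha\bigl(e^{z}-1\bigr) \qquad \text{for all } \alpha \in [0,1],\ z \geq 0.
\]
I would derive this directly from the convexity of $z \mapsto e^{z}-1$, which vanishes at $z=0$: writing $\alpha z = \alpha\cdot z + (1-\alpha)\cdot 0$ and applying Jensen's inequality gives $e^{\alpha z}-1 \leq \alpha(e^{z}-1) + (1-\alpha)(e^{0}-1) = \alpha(e^{z}-1)$. Plugging in $z = \Delta t_n D_S \geq 0$ and $\alpha = n/(n+1)$ yields
\[
\vbar_{n+1} = \tfrac{c}{D_S}\bigl(e^{\alpha z}-1\bigr) \leq \alpha\cdot\tfrac{c}{D_S}\bigl(e^{z}-1\bigr) = \tfrac{n}{n+1}\,\vbar_n,
\]
which is exactly \eqref{eq:vn_rec}.

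Given the recursion, the bound $\vbar_n \leq \tfrac{1}{n}\vbar_1$ follows immediately by induction: the product telescopes as
\[
\vbar_n \leq \Bigl(\prod_{k=1}^{n-1} \tfrac{k}{k+1}\Bigr)\vbar_1 = \tfrac{1}{n}\vbar_1.
\]
Since $\vbar_1 = \frac{c}{D_S}(e^{t^* D_S}-1)$ is a finite constant independent of $n$, the squeeze $0 \leq \vbar_n \leq \vbar_1/n$ forces $\lim_{n\to\infty} \vbar_n = 0$.

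The only step that is not completely mechanical is the convexity inequality, but this is a one-line Jensen argument; beyond that, everything reduces to the identity $\Delta t_{n+1}/\Delta t_n = n/(n+1)$ and a telescoping product. No hidden obstacle is expected.
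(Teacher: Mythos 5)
Your proof is correct and follows essentially the same route as the paper: both reduce the claim to the scalar inequality $e^{\alpha z}-1 \leq \alpha\left(e^{z}-1\right)$ for $\alpha = n/(n+1)$ and $z = \Delta t_n D_S$, and then telescope. The only difference is cosmetic --- the paper justifies that inequality by comparing the Taylor series termwise using $\alpha^k \leq \alpha$ for $k \geq 1$, whereas you use convexity of $z \mapsto e^z - 1$ together with its vanishing at $z = 0$; both are valid one-line arguments.
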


\begin{proof}
Note that $\Delta t_{n+1} = \frac{n}{n+1}\Delta t_n$. Using a Taylor series expansion, {we have $e^{\Delta t_{n+1}D_S} = 1 + \Delta t_{n+1}D_S + \frac{(\Delta t_{n+1}D_S)^2}{2!} + \ldots$, so that $e^{\Delta t_{n+1}D_S} - 1 = \sum_{k = 1}^{\infty} \frac{(\Delta t_{n+1}D_S)^k}{k!}$. Then,
}
\begin{align*}
&e^{\Delta t_{n+1} D_S}-1 = \sum_{k = 1}^{\infty} \left(\frac{n}{n+1}\right)^k \frac{(\Delta t_nD_S)^k}{k!} \\
&~\leq \frac{n}{n+1} \sum_{k = 1}^{\infty} \frac{(\Delta t_nD_S)^k}{k!} = \frac{n}{n+1} \left(e^{\Delta t_n D_S}-1\right),
\end{align*}
since $\left(\frac{n}{n+1}\right)^k \leq \frac{n}{n+1}$ for all $k \geq 1$, and each term in the summation is positive. Using \eqref{eq:vn}, result \eqref{eq:vn_rec} follows. Then, $\vbar_n \leq \frac{1}{n}\vbar_1$ and thus $\lim_{n \to \infty} \vbar_n = 0$.
\end{proof}

\begin{lemma} \label{lem:c1}
{The equation $e^y = \frac{y}{c_1} + 1$ has a real, positive solution for $y$ if and only if $0 < c_1 < 1$.}
\end{lemma}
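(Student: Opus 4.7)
The plan is to study the function $h(y) = e^y - \tfrac{y}{c_1} - 1$ and determine when it has a zero at some $y>0$. Note that $h(0)=0$ always, so $y=0$ is a trivial root; the question is whether a second, strictly positive root exists. Since $h'(y) = e^y - \tfrac{1}{c_1}$, the behaviour of $h$ on $(0,\infty)$ is controlled by the position of the critical point $y^* = -\ln c_1$ relative to $0$, and hence by whether $c_1$ lies above or below $1$.

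For the ``if'' direction, I would assume $0 < c_1 < 1$. Then $\tfrac{1}{c_1}>1$, so $h'(0) = 1-\tfrac{1}{c_1} < 0$, meaning $h$ is initially strictly decreasing. The unique critical point $y^* = -\ln c_1 > 0$ is a strict minimum on $[0,\infty)$, and $h(y^*) < h(0) = 0$. Since $e^y$ grows faster than any linear function, $h(y)\to\infty$ as $y\to\infty$. The intermediate value theorem, applied on $[y^*,\infty)$, then yields a positive real $y$ with $h(y)=0$.

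For the ``only if'' direction, I would contrapositively rule out every $c_1 \notin (0,1)$ (excluding $c_1=0$, for which the equation is undefined). If $c_1 \geq 1$, then $h'(y) = e^y - \tfrac{1}{c_1} \geq e^y - 1 \geq 0$ for all $y \geq 0$, with equality only at $y=0$ when $c_1=1$; thus $h$ is strictly increasing on $[0,\infty)$, so $h(y) > h(0) = 0$ for every $y>0$. If $c_1 < 0$, then for $y>0$ we have $\tfrac{y}{c_1} < 0$ and $e^y > 1$, whence $h(y) = e^y - \tfrac{y}{c_1} - 1 > 0$. In both cases no positive root exists.

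There is no serious obstacle here; it is a monotonicity and intermediate-value argument. The only subtlety worth being careful about is the borderline $c_1 = 1$, where the derivative vanishes at $y=0$; one must observe that it is strictly positive on $(0,\infty)$ so $h$ is still strictly increasing there. Packaging these cases cleanly should yield a short proof.
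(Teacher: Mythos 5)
Your argument is correct, but it takes a different route from the paper. The paper disposes of the lemma in one line by rewriting the equation as $\frac{y}{e^y-1} = c_1$ and citing a reference for the fact that $y \mapsto \frac{y}{e^y-1}$ is a bijection from $(0,\infty)$ onto $(0,1)$; existence and uniqueness of the positive root then follow simultaneously. You instead analyze $h(y) = e^y - \frac{y}{c_1} - 1$ directly: the sign of $h'(0) = 1 - \frac{1}{c_1}$ splits the cases, the intermediate value theorem on $[y^*,\infty)$ with $y^* = -\ln c_1$ gives existence when $0 < c_1 < 1$, and monotonicity of $h$ rules out positive roots when $c_1 \geq 1$ or $c_1 < 0$. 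Your version is self-contained and elementary, which is a genuine advantage over an external citation; the paper's version is shorter and delivers uniqueness of the root for free. Note that the paper does use uniqueness downstream (Theorem~\ref{thm:bound} defines $\overline{t}$ as \emph{the only} positive solution), so if you adopt your proof you should add the one extra sentence it already implicitly contains: $h$ is strictly negative on $(0, y^*]$ and strictly increasing from $h(y^*) < 0$ to $+\infty$ on $[y^*, \infty)$, so the positive root is unique. Your handling of the borderline case $c_1 = 1$ is careful and correct.
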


\begin{proof}
{The proof follows from \cite{FM05}, where it is shown that the function $\frac{y}{e^y-1}$ maps positive values of $y$ uniquely to the interval $(0,1)$. Since the given equation can be written as $\frac{y}{e^y-1} = c_1$, it follows that a real, positive solution for $y$ exists if and only if $0 < c_1 < 1$, and this solution is unique.}

\end{proof}

We now present the main result of this paper, developing conditions on $t^*$ to ensure \eqref{eq:SetU} is satisfied and simultaneously providing an explicit bound on the quantity in \eqref{eq:NLSoln}.  \vspace{0.25cm}

\begin{theorem}[Input Constraint and Boundedness] \label{thm:bound}
Assume $c_1 \coloneqq 2c\left\|g_{0}^{\dagger}\right\|_{\infty} < 1$, where $c$ is defined in Lemma \ref{lem:vn}. {Then, input constraint \eqref{eq:SetU} is satisfied by the sequence of inputs \eqref{eq:un} if $t^*$ satisfies
\begin{equation} \label{eq:t_cond}
\left\|g_{0}^{\dagger}(x_{tg}-x_0)\right\|_{\infty} \leq t^* \leq \overline{t},
\end{equation}
where $\overline{t}$ is the only real positive solution of the transcendental equation $e^{tD_S} = \frac{tD_S}{c_1} + 1$ for $t$. Under this condition,} for all $n \geq 1$, $x_{tg} - x_n$ is bounded by
\begin{equation} \label{eq:vn_bound}
\|x_{tg} - x_n\|_{\infty} \leq \vbar_n = \frac{c}{D_S}\left(e^{\Delta t_n D_S} - 1\right).
\end{equation}
\end{theorem}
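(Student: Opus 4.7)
The plan is to prove both conclusions by a simultaneous induction on $n$, since the input-constraint claim at step $n$ relies on the boundedness claim at step $n-1$, while the boundedness claim at step $n$ itself uses $\|u_n\|_\infty\le 1$.

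For the input constraint at $n=1$, definition \eqref{eq:u1} gives $\|u_1\|_\infty\le\|g_0^{\dagger}(x_{tg}-x_0)\|_\infty/t^*$, which is bounded by $1$ by the lower bound in \eqref{eq:t_cond}. For $n\ge 2$, assume inductively that $\|x_{tg}-x_{n-1}\|_\infty\le\vbar_{n-1}$. Then \eqref{eq:un} and submultiplicativity of $\|\cdot\|_\infty$ give
\[
\|u_n\|_\infty\le\frac{\|g_0^{\dagger}\|_\infty\vbar_{n-1}}{\Delta t_n}=\frac{n}{n-1}\cdot\frac{c_1}{2}\cdot\frac{e^{\Delta t_{n-1}D_S}-1}{\Delta t_{n-1}D_S},
\]
after substituting $\vbar_{n-1}$ from \eqref{eq:vn}, $\Delta t_n=\tfrac{n-1}{n}\Delta t_{n-1}$, and $c_1=2c\|g_0^{\dagger}\|_\infty$. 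Since $\tfrac{n}{n-1}\le 2$ for $n\ge 2$, it suffices to verify $e^{\Delta t_{n-1}D_S}\le\Delta t_{n-1}D_S/c_1+1$. Writing $y=\Delta t_{n-1}D_S$, Lemma \ref{lem:c1} identifies $\overline{t}D_S$ as the unique positive solution of $e^y=y/c_1+1$, and convexity of $e^y$ versus linearity of $y/c_1+1$ implies the inequality holds for all $y\in[0,\overline{t}D_S]$. Since $\Delta t_{n-1}\le t^*\le\overline{t}$, this is in hand.

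For the boundedness claim, I would start from \eqref{eq:NLSoln}; combining $\|u_n\|_\infty\le 1$ with \eqref{eq:f_cond}--\eqref{eq:g_cond} yields
\[
\|x_{tg}-x_n\|_\infty\le\int_{t_{n-1}}^{t_n}\bigl(\|f(x_0)\|_\infty+D_S\|x(t)-x_0\|_\infty\bigr)dt.
\]
To control $\|x(t)-x_0\|_\infty$ on $[t_{n-1},t_n]$, introduce the deviation $z(t)=x(t)-\tilde x(t)$ from the ideal interpolating trajectory $\tilde x(t)=x_{n-1}+\tfrac{t-t_{n-1}}{\Delta t_n}(x_{tg}-x_{n-1})$ that the driftless approximation predicts. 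Then $z(t_{n-1})=0$, $z(t_n)=x_n-x_{tg}$, and $\dot z=f(x)+d_g(x,x_0)u_n$, so the triangle inequality $\|x-x_0\|_\infty\le\|z\|_\infty+\|\tilde x-x_0\|_\infty$ gives a differential inequality of the form $\tfrac{d}{dt}\|z\|_\infty\le \|f(x_0)\|_\infty+D_S\|\tilde x(t)-x_0\|_\infty+D_S\|z\|_\infty$. A Grönwall comparison on this inequality produces the exponential shape $\tfrac{1}{D_S}(e^{\Delta t_n D_S}-1)$ multiplied by a constant depending on $\|f(x_0)\|_\infty$ and a uniform bound on $\|\tilde x(t)-x_0\|_\infty$.

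The delicate step is choosing that uniform bound so that the constant factor matches the specific $c$ in Lemma \ref{lem:vn}. Since $\tilde x(t)$ is a convex combination of $x_{n-1}$ and $x_{tg}$, one gets $\|\tilde x(t)-x_0\|_\infty\le\max(\|x_{n-1}-x_0\|_\infty,\|x_{tg}-x_0\|_\infty)$; the triangle inequality together with the inductive hypothesis gives $\|x_{n-1}-x_0\|_\infty\le\vbar_{n-1}+\|x_{tg}-x_0\|_\infty$. The extra summand $(D_S+1)\|x_{tg}-x_0\|_\infty$ in the definition of $c$ is engineered precisely to absorb this interpolation contribution, provided one can show $\vbar_{n-1}$ itself is dominated by $\|x_{tg}-x_0\|_\infty$ (a fact I expect to follow from $t^*\le\overline{t}$ together with $c_1<1$, mirroring the transcendental estimate used for the input-constraint step). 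Carrying out this algebraic balance so that everything collapses into the claimed $\vbar_n=\tfrac{c}{D_S}(e^{\Delta t_n D_S}-1)$ will be the main obstacle.
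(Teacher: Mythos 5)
Your input-constraint argument is sound and essentially the paper's: the $n=1$ case follows from the lower bound in \eqref{eq:t_cond}, and for $n\ge 2$ your reduction to $e^{y}\le y/c_1+1$ with $y=\Delta t_{n-1}D_S$, justified by convexity of $e^y$ against the chord through $y=0$ and $y=\overline{t}D_S$, is a valid (and slightly cleaner) substitute for the paper's monotonicity analysis of $h(t)=e^{tD_S}-tD_S/c_1-1$ combined with $\vbar_k\le\vbar_1/k$ and $\tfrac{k+1}{k}\le 2$.

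The boundedness half, however, has a genuine gap, and it is exactly the step you flag as "the main obstacle." Your decomposition runs Gr\"onwall on $\|z(t)\|_\infty=\|x(t)-\tilde x(t)\|_\infty$ after replacing $\|\tilde x(t)-x_0\|_\infty$ by the uniform constant $M=\vbar_{n-1}+\|x_{tg}-x_0\|_\infty$, which yields the prefactor $\bigl(\|f(x_0)\|_\infty+D_S M\bigr)/D_S$ in front of $e^{\Delta t_nD_S}-1$. Matching this to $c/D_S$ requires $D_S\vbar_{n-1}\le\|x_{tg}-x_0\|_\infty$, and this does \emph{not} follow from $c_1<1$ and $t^*\le\overline{t}$: those hypotheses only give $\vbar_{n-1}\le\vbar_1\le t^*/\bigl(2\|g_0^{\dagger}\|_\infty\bigr)$, a quantity with no relation to $\|x_{tg}-x_0\|_\infty/D_S$. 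Concretely, $\vbar_1\ge ct^*\ge\|f(x_0)\|_\infty\,t^*$, so whenever $\|f(x_0)\|_\infty$ is large relative to $\|x_{tg}-x_0\|_\infty$ the required inequality fails while all hypotheses of the theorem hold. The paper avoids this dead end by never freezing the state deviation into a constant: it keeps $\|x_{tg}-x(\tau)\|_\infty$ under the integral sign as $Q(t)=\int\|x_{tg}-x(\tau)\|_\infty\,\mathrm{d}\tau$, closes the loop with the identity $x_{tg}-x(t)=v(\cdot,t)+\tfrac{t_n-t}{\Delta t_n}(x_{tg}-x_{n-1})$ to obtain the self-referential inequality $\dot Q\le c\,t+D_SQ$, and extracts the bound on $\dot Q(t_n)=\|x_{tg}-x_n\|_\infty$ from the Gr\"onwall chain; the ``$+1$'' in $(D_S+1)$ absorbs the unexpended reference displacement, not the interpolation offset you are trying to absorb. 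To repair your argument you would need to switch to this integrated-error comparison (or otherwise retain the time dependence of the deviation) rather than hope that $\vbar_{n-1}$ is dominated by $\|x_{tg}-x_0\|_\infty$.
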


\begin{proof}


The proof proceeds by induction, and is split into two parts. In the first part, we prove the claim for $n = 1$. In the second part, we show that if the claim holds for some $n = k$, $k \geq 1$, then it must hold for $n = k+1$.

\underline{Part $1$: Proof for $n = 1$}: Consider $u_1(t)$ in \eqref{eq:u1}. {Clearly, if the condition
\begin{equation} \label{eq:t_cond1}
t^* \geq \left\|g_{0}^{\dagger}(x_{tg}-x_0)\right\|_{\infty},
\end{equation}
is satisfied, then $\|u_1(t)\|_{\infty} \leq 1$ for all $t \in [0,t^*]$, i.e., $u_1 \in \U$. This proves one part of condition \eqref{eq:t_cond}.} To prove the bound \eqref{eq:vn_bound} for $n = 1$, consider the quantity
\begin{equation} \label{eq:vt}
v(0,t) = -\int_{0}^{t} \left(f(x(\tau)) + d_g(x(\tau), x_0)u_1(\tau)\right) \mathrm{d}\tau
\end{equation}
for $t \in [0, t^*]$, based on \eqref{eq:NLSol1}. Using condition \eqref{eq:f_cond}, $f(x(\tau)) = f(x_0) + d_f(x(\tau), x_0)$. Substituting in \eqref{eq:vt},
\begin{align} 
v(0,t) = -\Bigl[tf(x_0) &+ \int_{0}^{t} d_f(x(\tau), x_0)\mathrm{d}\tau \nonumber \\
&+ \int_{0}^{t} d_g(x(\tau), x_0)u_1(\tau) \mathrm{d}\tau\Bigr]. \label{eq:prop1}
\end{align}
We now apply $\|\cdot\|_{\infty}$ on both sides of \eqref{eq:prop1} and use Jensen's inequality \cite{NPBook} to move the norm inside the integral. Then, using properties \eqref{eq:f_cond} and \eqref{eq:g_cond} and noting that \eqref{eq:t_cond1} guarantees $\|u_1(\tau)\|_{\infty} \leq 1$ for all $\tau \in [0, t^*]$,
\begin{align}
\|v(0,t)\|_{\infty} &\leq t \left\|f(x_0)\right\|_{\infty} + D_S \int_{0}^{t} \|x(\tau) - x_0\|_{\infty} \mathrm{d}\tau \nonumber \\
&\leq t \left(\left\|f(x_0)\right\|_{\infty} + D_S \|x_{tg} - x_0\|_{\infty}\right) \nonumber \\
&+ D_S \int_{0}^{t} \|x_{tg} - x(\tau)\|_{\infty} \mathrm{d}\tau, \label{eq:prop2}
\end{align}
where $D_S \coloneqq D_f + D_g$ and we use the triangle inequality $\|x(\tau) - x_0\|_{\infty} \leq \|x_{tg} - x_0\|_{\infty} + \|x_{tg} - x(\tau)\|_{\infty}$. Let $Q^1(t) \coloneqq \int_{0}^{t} \|x_{tg} - x(\tau)\|_{\infty}\mathrm{d}\tau$ for $t \in [0, t^*]$, with $Q^1(0) = 0$. From the first fundamental theorem of calculus \cite{Apostol}, $\dot{Q}^1(t) = \|x_{tg} - x(t)\|_{\infty}$, and we are interested in bounding $\dot{Q}^1(t^*) = \|x_{tg} - x(t^*)\|_{\infty} = \|x_{tg} - x_1\|_{\infty}$. For $t \in [0, t^*]$, consider the input $u_1(t)$ in \eqref{eq:u1} applied over the horizon $[0, t]$. Note that $u_1(t)$ is designed such that $g_0u_1(t) = \frac{1}{\Delta t_1}(x_{tg} - x_0)$ as $\Delta t_1 = t_1 = t^*$. Then, writing out the solution to \eqref{eq:System} and using \eqref{eq:g_cond},
\begin{align*}
&x(t) - x_0 \\
&~ = \int_{0}^{t}f(x(\tau))\mathrm{d}\tau + \int_{0}^{t} d_g(x(\tau), x_0)u_1(\tau)\mathrm{d}\tau \\
&~+ g_0\int_{0}^{t} u_1(\tau)\mathrm{d}\tau = -v(0,t) + \frac{t}{\Delta t_1} (x_{tg} - x_0).
\end{align*}
Rearranging,
$$
x_{tg} - x(t) = v(0,t) + \frac{t_1-t}{\Delta t_1} (x_{tg} - x_0).
$$
{Taking norms on both sides,
\begin{align}
\dot{Q}^1(t) &= \|x_{tg} - x(t)\|_{\infty} \nonumber \\
&\leq \|v(0,t)\|_{\infty} + \frac{t_1-t}{\Delta t_1}\|x_{tg} - x_0\|_{\infty} \nonumber \\
&\leq \|v(0,t)\|_{\infty} + \|x_{tg} - x_0\|_{\infty} \text{ for all } t \in [0, t^*],\label{eq:normbound1}
\end{align}
since $\frac{t_1-t}{\Delta t_1} \leq 1$. Adding $\|x_{tg} - x_0\|_{\infty}$ to both sides of \eqref{eq:prop2},} using the definition of $Q^1(t)$ and \eqref{eq:normbound1}, \eqref{eq:prop2} can be rewritten as the differential inequality
\begin{align}
	&{\dot{Q}^1(t) \leq ct + D_S Q^1(t) + \|x_{tg} - x_0\|_{\infty}}, ~t \in [0, t^*], \label{eq:propQ}
\end{align}
with $Q^1(0) = 0$ and where $c$ is defined in Lemma \ref{lem:vn}. Let $P^1(t) \coloneqq ct + D_SQ^1(t) + \|x_{tg} - x_0\|_{\infty}$ so that $P^1(0) = 0$ and $\dot{Q}^1(t) \leq P^1(t)$. Thus, $\dot{P}^1(t) = c + D_S\dot{Q}^1(t) \leq c + D_SP^1(t)$. Subsequently, let $R^1(t) \coloneqq c + D_SP^1(t)$ so that $R^1(0) = c$ and $\dot{P}^1(t) \leq R^1(t)$. Then, 
\begin{equation} \label{eq:propQ2}
\dot{R}^1(t) = D_S\dot{P}^1(t) \leq D_S R^1(t), ~t \in [0, t^*], ~R^1(0) = c,
\end{equation}
and we have a linear differential inequality in $R^1(t)$. Applying Gr{\"o}nwall's inequality \cite{BGP} on the horizon $[0, t^*]$ in \eqref{eq:propQ2}, we have $R^1(t) \leq ce^{t D_S}$ so that
\begin{equation} \label{eq:propQ1}
	P^1(t) = \frac{1}{D_S}(R^1(t) - c) \leq \frac{c}{D_S}\left(e^{t D_S}-1\right)
\end{equation}
from the definition of $R^1(t)$. Finally, we substitute $t = t^* = \Delta t_1$, and use \eqref{eq:prop2}, \eqref{eq:propQ}, \eqref{eq:propQ1} and the fact that $\dot{Q}^1(t) \leq P^1(t)$ for $t \in [0, t^*]$ to obtain
\begin{align}
\|x_{tg} &- x_1\|_{\infty} = \dot{Q}^1(t^*) \nonumber \\
&\leq P^1(t^*) \leq \vbar_1 \coloneqq \frac{c}{D_S}\left(e^{\Delta t_1 D_S} - 1\right), \label{eq:v1}
\end{align}
proving the bound \eqref{eq:vn_bound} for $n = 1$.

\underline{Part 2: Proof for $n \geq 1$}: We now make the induction hypotheses that for some $n = k$ and $k \geq 1$, $\|u_k(t)\|_{\infty} \leq 1$ for all $t \in [t_{k-1}, t_k]$ under condition \eqref{eq:t_cond}, and that $\|x_{tg} - x_k\|_{\infty} \leq \vbar_k$. Then, based on \eqref{eq:un}, consider
\begin{equation} \label{eq:uk1}
	u_{k+1}(t) \equiv \frac{1}{\Delta t_{k+1}} g_{0}^{\dagger} (x_{tg} - x_k) \text{ for all } t \in [t_k, t_{k+1}].
\end{equation}
{We now show that $u_{k+1} \in \U$ if condition \eqref{eq:t_cond} is satisfied. Rewriting $\|u_{k+1}(t)\|_{\infty} \leq 1$, $\left\|\frac{1}{\Delta t_{k+1}} g_{0}^{\dagger} (x_{tg} - x_k)\right\|_{\infty} \leq 1$, which is satisfied if
\begin{equation} \label{eq:t_cond23}
\frac{k+1}{t^*} \left\|g_{0}^{\dagger}\right\|_{\infty} \vbar_k \leq 1,
\end{equation}
using the submultiplicative property of norms, the inductive hypothesis $\|x_{tg} - x_k\|_{\infty} \leq \vbar_k$, and using $\Delta t_{k+1} = \frac{t^*}{k+1}$. Next, we note $\vbar_k \leq \frac{1}{k}\vbar_1$ from Lemma~\ref{lem:vn}, and that $\frac{k+1}{k} \leq 2$ for all $k \geq 1$. Thus, \eqref{eq:t_cond23} is satisfied if
\begin{equation} \label{eq:t_cond21}
\frac{2}{t^*}\left\|g_{0}^{\dagger}\right\|_{\infty} \vbar_1 \leq 1
\end{equation}
is satisfied. Rearranging, 
$$
t^* \geq 2\left\|g_{0}^{\dagger}\right\|_{\infty} \vbar_1 = 2\left\|g_{0}^{\dagger}\right\|_{\infty} \frac{c}{D_S} \left(e^{t^*D_S}-1\right),
$$
or, substituting $c_1 = 2\left\|g_{0}^{\dagger}\right\|_{\infty}c$,
\begin{equation} \label{eq:t_cond22}
	e^{t^*D_S} \leq \frac{t^*D_S}{c_1} + 1.
\end{equation}
Consider the function $h(t) = e^{tD_S}-\frac{tD_S}{c_1}-1$ for $t \geq 0$. Then $\frac{dh(t)}{dt} = D_S\left(e^{tD_S}-\frac{1}{c_1}\right)$, which increases from negative to arbitrarily large positive values as $t$ increases, since $c_1 < 1$ as assumed in the theorem. Thus, for $t \geq 0$, $h(t)$ starts at $h(0) = 0$, initially decreases and then increases without bound. The only other zero crossing is given by $h\left(\overline{t}\right) = 0$, where $\overline{t}$ is a unique real positive number from Lemma~\ref{lem:c1}. Thus, \eqref{eq:t_cond22} is satisfied when
\begin{equation} \label{eq:t_cond2}
	h(t) \leq 0, \text{ or } t^* \leq \overline{t},
\end{equation}
or $u_{k+1} \in \U$ if $t^* \leq \overline{t}$, proving the second part of \eqref{eq:t_cond}.
}

To prove \eqref{eq:vn_bound} for $n = k+1$, consider
\begin{equation} \label{eq:vtk}
v(t_k,t) = -\int_{t_k}^{t} \left(f(x(\tau)) + d_g(x(\tau), x_0)u_{k+1}(\tau)\right) \mathrm{d}\tau
\end{equation}
for $t \in [t_k, t_{k+1}]$, based on \eqref{eq:NLSoln}. Using arguments as used in Part $1$, we can show that
\begin{align*}
\|v(t_k, t)\|_{\infty} &\leq (t-t_k)\left(\left\|f(x_0)\right\|_{\infty} + D_S \|x_{tg} - x_0\|_{\infty}\right) \\
&+ D_S \int_{t_k}^{t} \|x_{tg} - x(\tau)\|_{\infty} \mathrm{d}\tau
\end{align*}
similar to \eqref{eq:prop2}, since $\|u_{k+1}(t)\|_{\infty} \leq 1$ as proved above. Then, define $Q^{k+1}(t) \coloneqq \int_{t_k}^{t} \|x_{tg} - x(\tau)\|_{\infty}\mathrm{d}\tau$ for $t \in [t_k, t_{k+1}]$ with $Q^{k+1}(t_k) = 0$. Using arguments identical to Part $1$, it can be shown that
\begin{equation} \label{eq:Qk1dot}
\dot{Q}^{k+1}(t) = \|x_{tg} - x(t)\|_{\infty} \leq \frac{c}{D_S}\left(e^{(t-t_k)D_S}-1\right),
\end{equation}
and thus
\begin{align}
\|x_{tg} - x(t_{k+1})\|_{\infty} &= \|x_{tg} - x_{k+1}\|_{\infty} \nonumber \\
&\leq \vbar_{k+1} = \frac{c}{D_S}\left(e^{\Delta t_{k+1}D_S}-1\right) \label{eq:vk1}
\end{align}
{as defined in Lemma \ref{lem:vn},} thus proving the bound \eqref{eq:vn_bound} for $n = k+1$. Hence, using induction, the input constraint \eqref{eq:SetU} is satisfied by the sequence of inputs \eqref{eq:un}, and the bound \eqref{eq:vn_bound} holds for all $n$, concluding the proof.

\end{proof}

We now exploit Lemma \ref{lem:vn} and Theorem \ref{thm:bound} to prove that the sequence of inputs \eqref{eq:un} solves Problem \ref{prob}. \vspace{0.25cm}

\begin{corollary}[Convergence to Target State] \label{cor:conv}

\phantom{}
\begin{enumerate}[label = $(\roman{enumi})$]
	\item For $n \geq 1$, the sequence of control inputs \eqref{eq:un} ensures that for any $\varepsilon > 0$, there exists some $\underline{n}$ such that $\|x_{tg} - x_n\|_{\infty} < \varepsilon$ for all $n \geq \underline{n}$.
	\item For any $\varepsilon > 0$, there exists some $\tbar$ such that $\|x_{tg} - x(t)\|_{\infty} < \varepsilon$ for all $t \geq \tbar$, thus solving Problem \ref{prob}.
\end{enumerate}

\end{corollary}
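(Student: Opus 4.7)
The plan is to combine the discrete-time bound from Theorem~\ref{thm:bound} with the vanishing property of $\{\vbar_n\}$ established in Lemma~\ref{lem:vn}, and then strengthen the result from the sampling instants $\{t_n\}$ to continuous time.

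Part~$(i)$ is essentially immediate. Theorem~\ref{thm:bound} yields $\|x_{tg} - x_n\|_\infty \leq \vbar_n$ for every $n \geq 1$, while Lemma~\ref{lem:vn} provides $\vbar_n \leq \vbar_1/n$ and $\vbar_n \to 0$. Thus, for any $\varepsilon > 0$, picking $\underline{n}$ to be any integer exceeding $\vbar_1/\varepsilon$ guarantees $\|x_{tg} - x_n\|_\infty \leq \vbar_n < \varepsilon$ for all $n \geq \underline{n}$.

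For part~$(ii)$, the extra work is to bound $\|x_{tg} - x(t)\|_\infty$ \emph{uniformly} across each interval $[t_n, t_{n+1}]$, not merely at its endpoints. The most direct route is to revisit the integral inequality inside the proof of Theorem~\ref{thm:bound}: the chain leading to~\eqref{eq:Qk1dot} actually yields $\|x_{tg} - x(t)\|_\infty \leq \frac{c}{D_S}\bigl(e^{(t-t_n)D_S}-1\bigr) \leq \vbar_{n+1}$ for every $t \in [t_n, t_{n+1}]$, since the right-hand side is monotonically increasing in $t$. An alternative is to write $\|x_{tg} - x(t)\|_\infty \leq \|x_{tg} - x_n\|_\infty + \|x(t) - x_n\|_\infty$ and bound the second term via a uniform Lipschitz estimate on the trajectory: because $\{x_n\}$ stays bounded (it is converging), $\|u_{n+1}(\cdot)\|_\infty \leq 1$ by Theorem~\ref{thm:bound}, and~\eqref{eq:f_cond}--\eqref{eq:g_cond} bound $f$ and $g$ on bounded sets, $\|\dot x(t)\|_\infty$ is uniformly bounded by some constant $M$, so $\|x(t) - x_n\|_\infty \leq M\Delta t_{n+1} \to 0$. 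Either route shows the continuous-time error is dominated by a sequence that vanishes with $n$.

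With such an intermediate bound in hand, the conclusion is assembled as follows. Given $\varepsilon > 0$, Lemma~\ref{lem:vn} supplies an index $N$ with $\vbar_N < \varepsilon$; set $\tbar = t_{N-1}$. Any $t \geq \tbar$ lies in some interval $[t_n, t_{n+1}]$ with $n \geq N-1$, and the monotonicity of $\{\vbar_n\}$ implied by~\eqref{eq:vn_rec} gives $\|x_{tg} - x(t)\|_\infty \leq \vbar_{n+1} \leq \vbar_N < \varepsilon$, which is precisely the condition required by Problem~\ref{prob}. The principal obstacle I anticipate is exactly this continuous-time extension: Theorem~\ref{thm:bound} as stated only controls the error at the sampling instants, so one must either carefully reuse the differential-inequality argument of its proof or establish an auxiliary Lipschitz bound on $x(t)$ between samples, taking care that on $[t_n, t_{n+1}]$ the dynamics are driven by the freshly recomputed input $u_{n+1}$ rather than $u_n$.
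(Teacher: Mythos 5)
Your proposal is correct and follows essentially the same route as the paper: part $(i)$ combines the bound \eqref{eq:vn_bound} from Theorem~\ref{thm:bound} with the vanishing of $\vbar_n$ from Lemma~\ref{lem:vn}, and part $(ii)$ uses exactly the intermediate estimate \eqref{eq:Qk1dot} to get the uniform bound $\|x_{tg}-x(t)\|_{\infty} \leq \vbar_{n+1}$ on each interval $[t_n, t_{n+1}]$ before choosing $\tbar$. Your alternative Lipschitz-on-the-trajectory argument for part $(ii)$ would also work, but the paper takes your ``most direct route.''
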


\begin{proof}
~
\phantom{}
~
\begin{enumerate}[label = $(\roman{enumi})$]

\item From \eqref{eq:vn_rec}, there clearly exists some $\underline{n}$ such that $\vbar_n < \varepsilon$ for all $n \geq \underline{n}$, for any $\varepsilon > 0$. Using \eqref{eq:vn_bound}, the result follows.

\item It is easy to see that part $(i)$ of this proof solves a weaker version of Problem \ref{prob}, where the guarantee holds only at the time instants $t_n$ for $n \geq \underline{n}$. From \eqref{eq:Qk1dot}, we also know that for $t_n < t < t_{n+1}$, $\|x_{tg} - x(t)\|_{\infty} \leq \frac{c}{D_S}\left(e^{(t-t_n)D_S}-1\right) \leq \frac{c}{D_S}\left(e^{\Delta t_{n+1}D_S}-1\right) = \vbar_{n+1}$. From part $(i)$, we know that for $n \geq \underline{n}$, $\vbar_{n+1} < \varepsilon$ for all $\varepsilon > 0$. Then, we let $\tbar = t_{\underline{n}}$ and note that any $t > \tbar$ is in some $[t_n, t_{n+1}]$ for $n \geq \underline{n}$. From the above arguments, for all $t \geq \tbar$, $\|x_{tg} - x(t)\|_{\infty} < \varepsilon$ for all $\varepsilon > 0$.



\end{enumerate}

\end{proof}

Corollary \ref{cor:conv} is the culmination of the proof that the method in Section \ref{sec:Method} solves Problem \ref{prob}, and is based on the groundwork laid in Lemma \ref{lem:vn} and Theorem \ref{thm:bound}. A simple illustration of the convergence behavior is provided in Fig.~\ref{fig:Traj}, showing that the state $x_n$ at each instant $t_n$ is within $\vbar_n$ of the target state $x_{tg}$. \vspace{0.25cm}

\begin{figure}[!t]
	\centering
	\includegraphics[width = 0.3\textwidth]{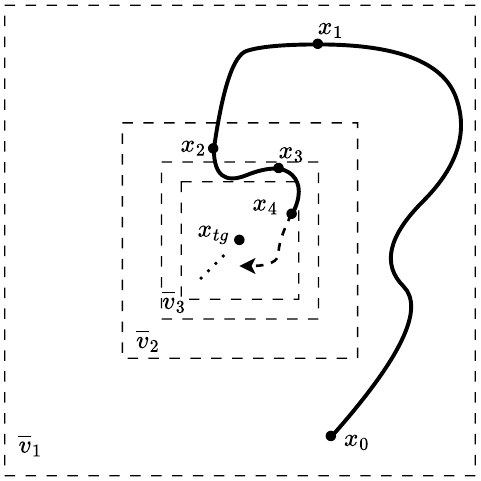}
	\caption{A simple illustration of convergence behavior from Theorem \ref{thm:bound} and Corollary \ref{cor:conv}. Here, $x_0$ is the initial state, $x_{tg}$ is the target state, and $x_n = x(t_n)$ denotes the state achieved at each instant $t_n$. The dashed boxes denote the $\infty$-norm balls with radius $\vbar_n$ achieved according to \eqref{eq:vn_bound}. While $x_n$ is guaranteed to be within $\vbar_n$ of $x_{tg}$, in practice, $x_n$ may be even closer to $x_{tg}$ and may lie within $\vbar_k$ of $x_{tg}$ for $k > n$.}
	\label{fig:Traj}
\end{figure}

\begin{remark}
{While \eqref{eq:t_cond} guarantees that the input constraint \eqref{eq:SetU} is satisfied, we note that condition \eqref{eq:t_cond} is sufficient and not necessary.} In particular, the right-hand side $t^* \leq \overline{t}$ can be extremely constraining, since the bound in \eqref{eq:t_cond23} can be very conservative. We show in the next section that $t^*$ can simply be viewed as a tuning parameter, and even if there does not exist a $t^*$ satisfying \eqref{eq:t_cond}, there exist \emph{reasonable} choices of $t^*$ that will often lead to the target state being achieved and and input constraint being satisfied.
\end{remark}

\vspace{0.25cm}

\begin{remark}[Relaxing Assumption \ref{asm:ctrb}] \label{rem:relax}
{We briefly address the case when $g_0$ does not have full row rank, i.e., Assumption~\ref{asm:ctrb} is violated. At $x(t_{n-1}) = x_{n-1}$, we apply the control input $u_n$ in \eqref{eq:un} over the interval $[t_{n-1}, t_n]$. In the driftless system \eqref{eq:Driftless}, this input will not necessarily achieve $x(t_n) = x_{tg}$ as Assumption~\ref{asm:ctrb} is violated. Let $x_{n}^{dft}$ denote the state achieved by \eqref{eq:Driftless} at $t_n$, so that
$$
    g_0 \int_{t_{n-1}}^{t_n} u_n(t)\mathrm{d}t = \Delta t_n g_0 \ubar_n = x_{n}^{dft} - x_{n-1}.
$$
The quantity $x_{n}^{dft}$ is such that 
\begin{align*}
    x_{tg} - x_{n}^{dft} &= (x_{tg} - x_{n-1}) - (x_{n}^{dft} - x_{n-1}) \\
    &= (I - g_0g_{0}^{\dagger})(x_{tg} - x_{n-1}),
\end{align*}
substituting $u_n(t)$ from \eqref{eq:un}. The above expression quantifies the error from the target state in the linear driftless dynamics. In the original nonlinear dynamics \eqref{eq:System}, applying \eqref{eq:un} over the interval $[t_{n-1}, t_n]$, 
\begin{align*}
&x_n - x_{n-1} = \int_{t_{n-1}}^{t_n} f(x(t))\mathrm{d}t + g_0 \int_{t_{n-1}}^{t_n} u_n(t) \mathrm{d}t \\
&\! + \!\int_{t_{n-1}}^{t_n} \! \!d_g(x(t), x_0)u_n(t)\mathrm{d}t = -v(t_{n-1}, t_n) + x_{n}^{dft} - x_{n-1}.
\end{align*}
Subtracting $x_{tg}$ on both sides and rearranging, the error from the target state in the nonlinear system at $t_n$ is then
$$
x_{tg} - x_n = v(t_{n-1}, t_n) + (I - g_0g_{0}^{\dagger})(x_{tg} - x_{n-1}).
$$
Comparing with \eqref{eq:NLSoln}, we note that the term $(I - g_0g_{0}^{\dagger})(x_{tg} - x_{n-1})$ adds to $v(t_{n-1}, t_n)$. Since this relation holds for all $n$, we also note that $x_{tg} - x_{n-1} = v(t_{n-2}, t_{n-1}) + (I - g_0g_{0}^{\dagger}) (x_{tg} - x_{n-2})$, and the error from the target state is thus impacted by the cumulative effect of $(I - g_{0}g_{0}^{\dagger})^n$. Our examples in the next section illustrate that these additional terms can decay to zero for some $g_0$, as the intervals of time become shorter (i.e., $\Delta t_n \to 0$). Thus, the target state may be achieved despite Assumption 1 being violated.
}
\end{remark}

\section{Examples} \label{sec:Examples}
We now present a set of case studies that show how Problem~\ref{prob} can be solved for a variety of systems using the procedure described in Section~\ref{sec:Method}. Our first example satisfies the basic assumptions described in Section~\ref{sec:Formulation}, i.e., conditions \eqref{eq:f_cond}, \eqref{eq:g_cond} and Assumption \ref{asm:ctrb}. However, in subsequent examples, we relax these assumptions and show in simulation that satisfactory performance can still be achieved using the sequence of control inputs \eqref{eq:un}. The code for our method is available online\footnote{https://github.com/leadcatlab/Constrained-NL-Control}.

\subsection{Example 1: ADMIRE fighter jet model with wind effects}
We consider the dynamics of the ADMIRE fighter jet model subjected to wind effects as discussed in \cite{PO25b}, based on data from \cite{SDRA} and wind models in \cite{WWS24}. These dynamics can be written as $\dot{x} = Ax+Bu+f_w(x)$, where
\begin{align}
\begin{split} \label{eq:ADMIRE}
x &= \begin{bmatrix} p \\ q \\ r \end{bmatrix}; \hspace{0.5em} A = \begin{bmatrix} -0.9967 & 0 & 0.6176 \\ 0 & -0.5057 & 0 \\ -0.0939 & 0 & -0.2127 \end{bmatrix}; \\
B &= \begin{bmatrix} 0 & -4.2423 & 4.2423 & 1.4871 \\ 1.6532 & -1.2735 & -1.2735 & 0.0024 \\ 0 & -0.2805 & 0.2805 & -0.8823 \end{bmatrix}; \\
f_w(x) &= \frac{1}{2}\left[\sin(p)\cos^2(p), -\sin(2q), 1\right]^{\T}.
\end{split}
\end{align}
The states consist of roll, pitch and yaw rates while the inputs consist of deflections of the canard, right and left elevons and the rudder. The linear components of these dynamics were established in \cite{SDRA}, and the wind effects consist of sinusoidal and constant expressions as modeled in \cite{WWS24}. As discussed in \cite{PO25b}, these effects are such that the Lipschitz constant of $f_w(x)$ is no more than $1$, and thus $D_f = \|A\|_{\infty}+1 = 2.6143$. Further, $D_g = 0$ as the dynamics are linear in the input, and thus $g_0 = B$ and Assumption \ref{asm:ctrb} is satisfied.

\begin{figure}[!t]
\centering
\begin{subfigure}{0.38\textwidth}
	\centering
	\includegraphics[width = \textwidth]{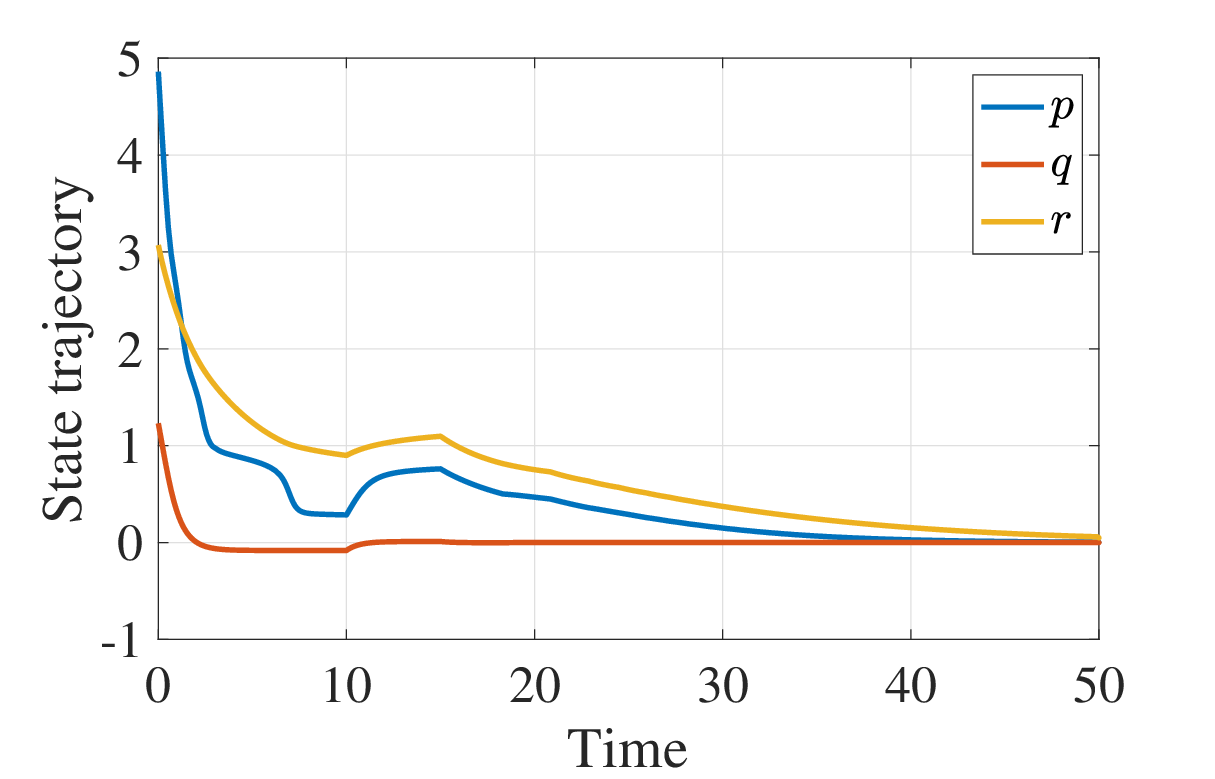}
	\caption{}
\end{subfigure}
\\
\begin{subfigure}{0.38\textwidth}
	\centering
	\includegraphics[width = \textwidth]{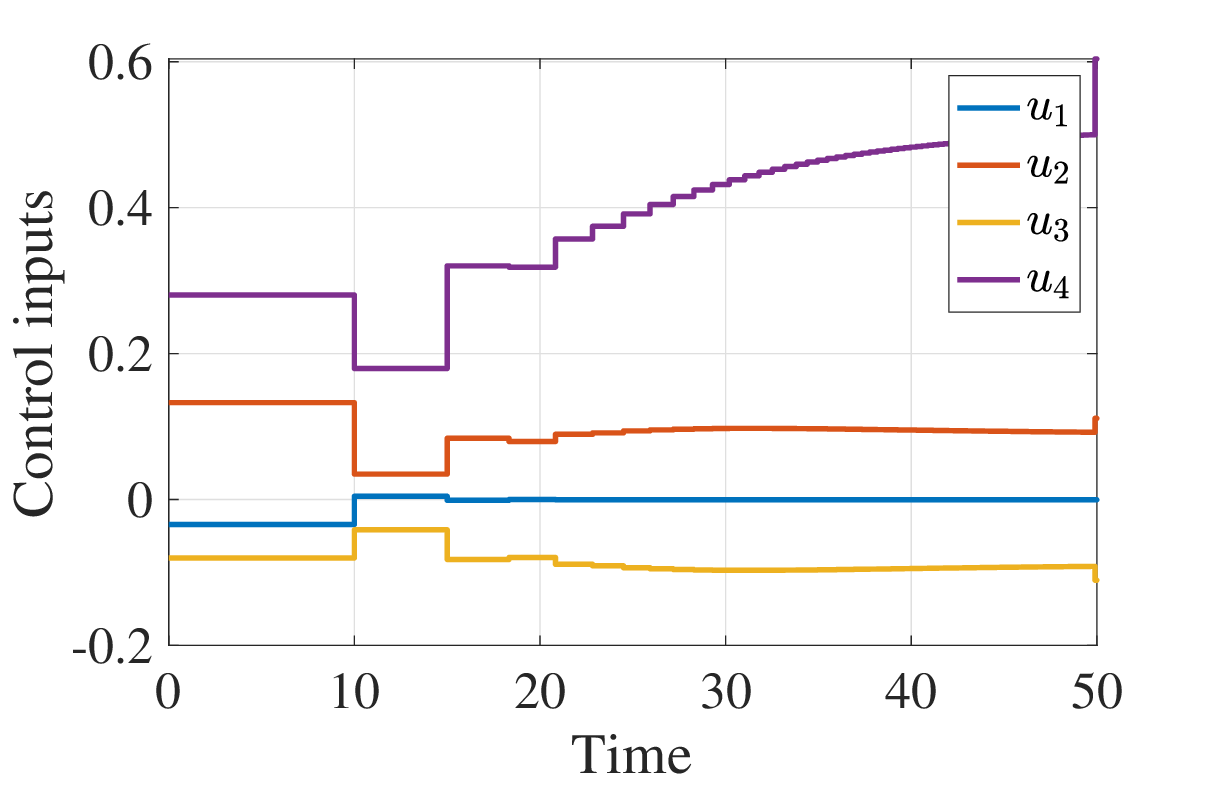}
	\caption{}
\end{subfigure}

\caption{Control of the ADMIRE fighter jet model subjected to wind effects. (a) State trajectories, (b) Control inputs. Clearly, the target state is achieved while satisfying the input constraint.}
\label{fig:ADMIRE}

\end{figure}

We randomly select the initial state $x_0 = [4.86~1.23~3.07]^{\T}$ and fix the target state $x_{tg} = [0~0~0]^{\T}$ with $t^* = 10$. Fig.~\ref{fig:ADMIRE} illustrates state trajectories and control inputs under the sequence of inputs \eqref{eq:un} for this example. It is clearly seen that the target state is achieved while simultaneously satisfying the input constraint \eqref{eq:SetU}. In particular, the piecewise constant nature of the sequence of inputs \eqref{eq:un} is distinctly visible. We remark here that sufficient condition \eqref{eq:t_cond} may not necessarily be satisfied in this example with the chosen value of $t^*$. Nevertheless, it is easily seen that the input constraint remains satisfied.

\begin{figure}[!t]
\centering
\begin{subfigure}{0.38\textwidth}
	\centering
	\includegraphics[width = \textwidth]{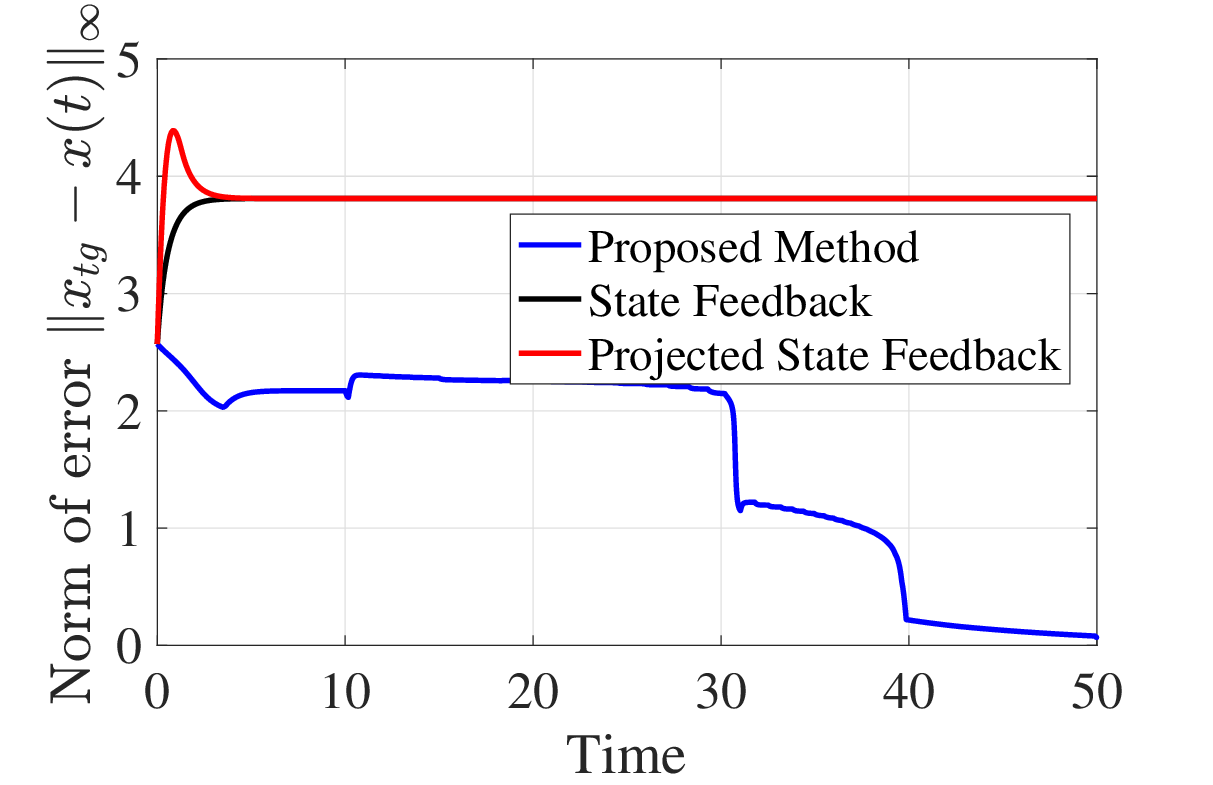}
	\caption{}
\end{subfigure}
\\
\begin{subfigure}{0.38\textwidth}
	\centering
	\includegraphics[width = \textwidth]{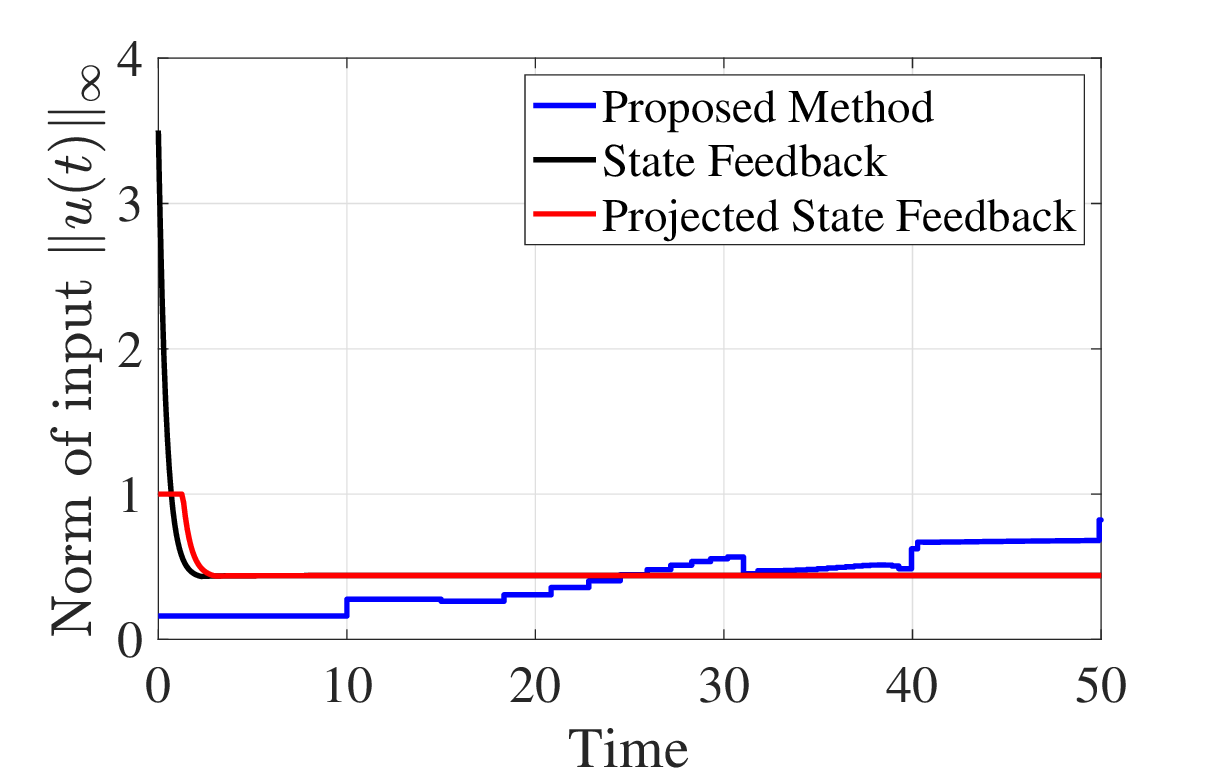}
	\caption{}
\end{subfigure}

\caption{{Comparing the performance of the proposed method with linear state feedback in achieving a non-zero target state. (a) Norm of error, (b) Norm of input. The proposed method outperforms the state feedback controller and the same controller projected onto the constraint set \eqref{eq:SetU}, both in achieving the target state and in satisfying the input constraint.}}
\label{fig:NZ_SF}

\end{figure}

{In Fig.~\ref{fig:NZ_SF}, we compare the performance of the proposed method with $t^* = 10$, to the performance of linear state feedback after linearizing the given dynamics, a procedure described in \cite{Khalil_NS, OC76}. In this case, we randomly select a non-zero target state $x_{tg} = [3.81~2.13~3.73]^{\T}$, starting from a randomly chosen initial state $x_0 = [1.24~2.76~2.08]^{\T}$. We note that nonlinear effects in the model disrupt the performance of linear state feedback, and the error does not reduce to zero, unlike when there are only linear dynamics present. The state feedback controller does not satisfy the input constraint either. Further, on projecting the state feedback controller onto the constraint set \eqref{eq:SetU}, transient performance degrades and the error is still nonzero. In comparison to these, the proposed method ensures that the error reduces to zero while still satisfying the input constraint.}

Finally, in Fig.~\ref{fig:NZ}, we also compare the performance of the proposed method with $t^* = 10$, to the performance of CasADi \cite{Casadi}, an open-source tool for efficient nonlinear model predictive control, interfaced with IPOPT \cite{Ipopt}. In this comparison we randomly select a non-zero target state $x_{tg} = [3.81~2.13~3.73]^{\top}$, starting from a randomly chosen initial state $x_0 = [1.24~2.76~2.08]^{\top}$. We note that both methods achieve the target state as the respective errors converge to zero. However, nonlinear MPC requires significantly more control effort, with at least one of the inputs applying maximum possible effort for an extended period of time. We also compare the computational time of both methods in Table~\ref{tab:Comparison}, where the proposed method clearly outperforms the nonlinear MPC solver. In particular, the computational time is improved by a factor of nearly $5$. Such behavior is in line with our expectations, as nonlinear MPC relies on repeatedly solving a potentially challenging optimization problem.

\begin{figure}[!t]
\centering
\begin{subfigure}{0.33\textwidth}
	\centering
	\includegraphics[width = \textwidth]{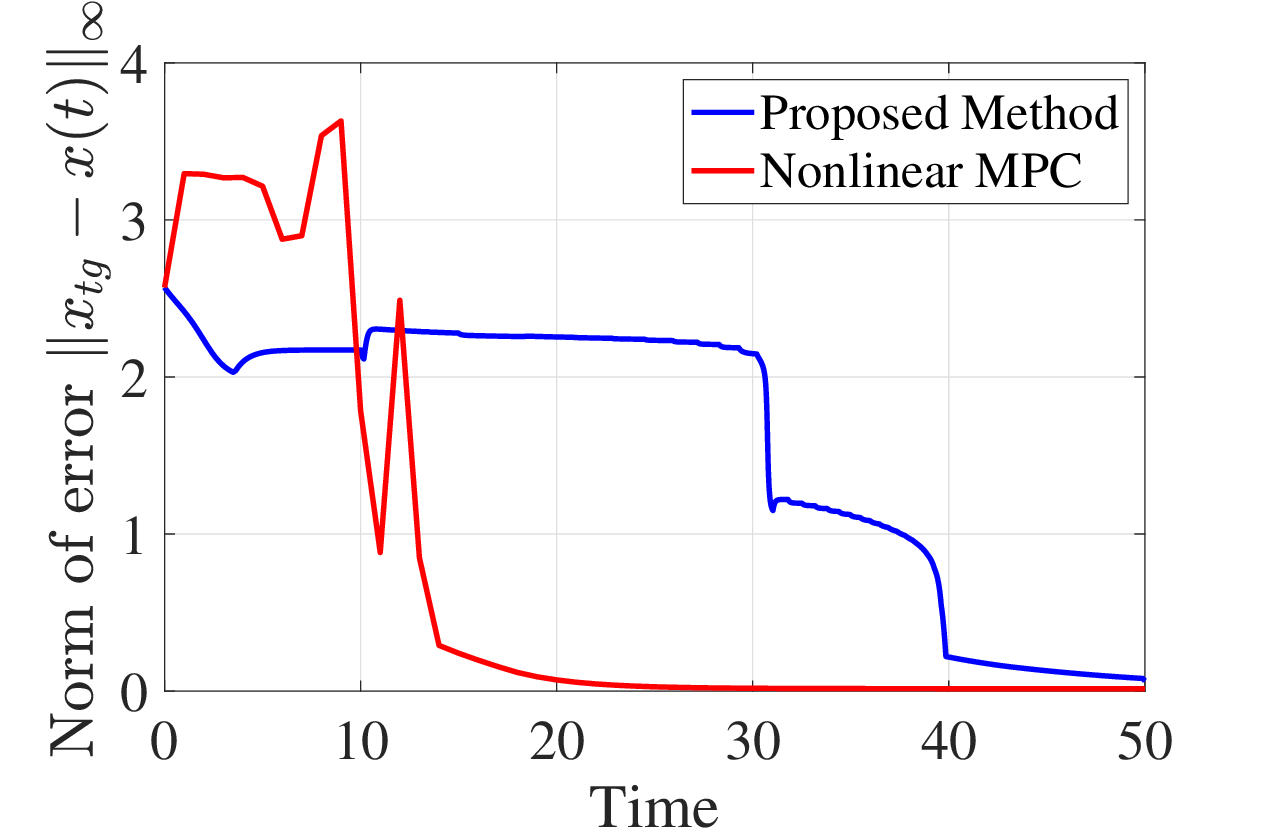}
	\caption{Norm of error}
\end{subfigure}
\\
\begin{subfigure}{0.33\textwidth}
	\centering
	\includegraphics[width = \textwidth]{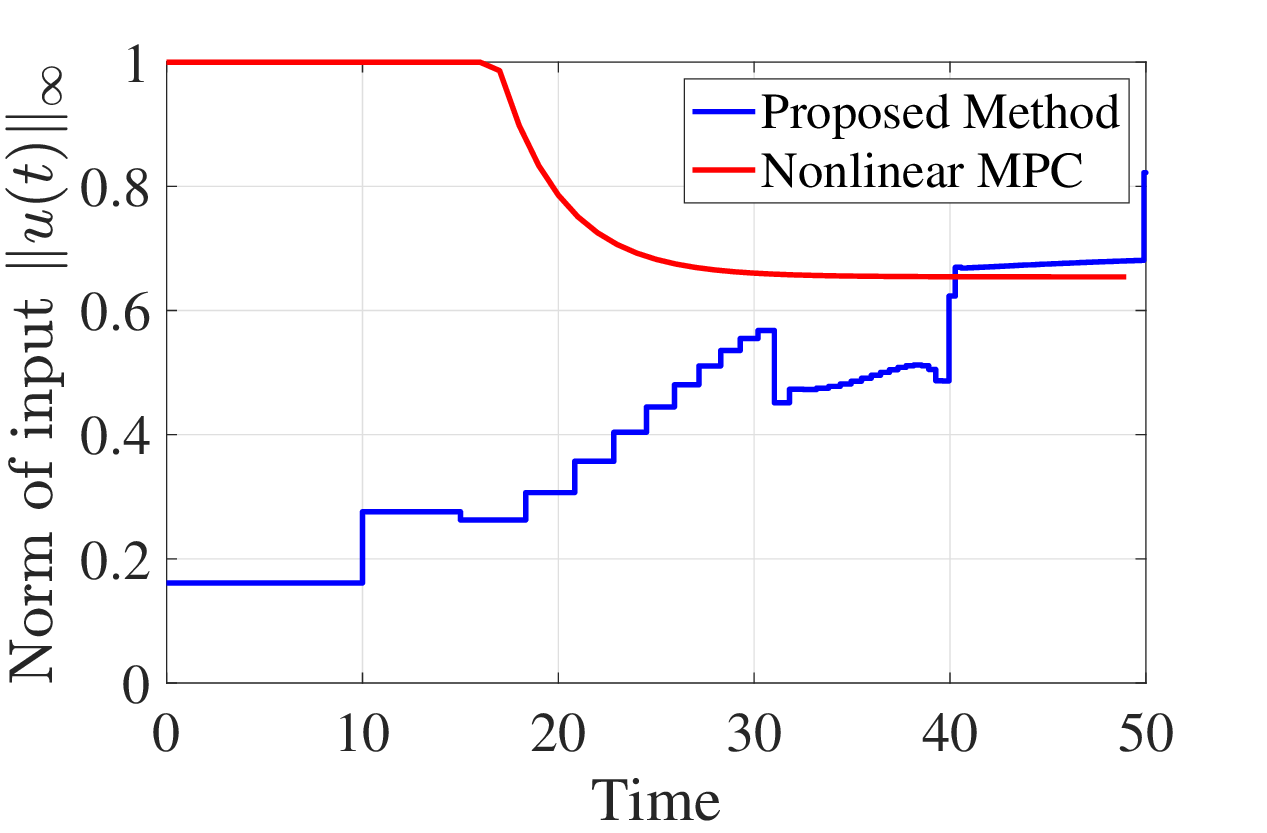}
	\caption{Norm of input}
\end{subfigure}

\caption{{Comparing the performance of the proposed method with nonlinear model predictive control in achieving a non-zero target state. While both methods achieve the target state, nonlinear MPC requires significantly more control effort and takes significantly longer computational time.}}
\label{fig:NZ}

\end{figure}

\begin{table}[!t]
    \centering
    \caption{{Comparing computational time between the proposed method and nonlinear MPC to reach a nonzero target state.}}
    
    {\begin{tabular}{l|cc} \toprule
    \textbf{Method} & Proposed Method & Nonlinear MPC \\ \midrule
    \textbf{Time} & 0.1163s & 0.5774s \\ \bottomrule
    \end{tabular}}
    
    \label{tab:Comparison}
\end{table}

\subsection{Example 2: Forced van der Pol oscillator}
We now consider the dynamics of the forced van der Pol oscillator given by
\begin{align} \label{eq:VDP}
\begin{split}
	\dot{x} &= y, \\
	\dot{y} &= \mu(1-x^2)y - x + u,
\end{split}
\end{align}
where $x$ denotes the position coordinate and $y$ denotes the velocity coordinate. The scalar parameter $\mu$ is an indication of the strength of the nonlinearity. Stabilizing these dynamics using the input $u$ is an important objective in electrical circuits and networks \cite{EN1, EN2, EN3}. 

\begin{figure}[!t]
\centering
\begin{subfigure}{0.38\textwidth}
	\centering
	\includegraphics[width = \textwidth]{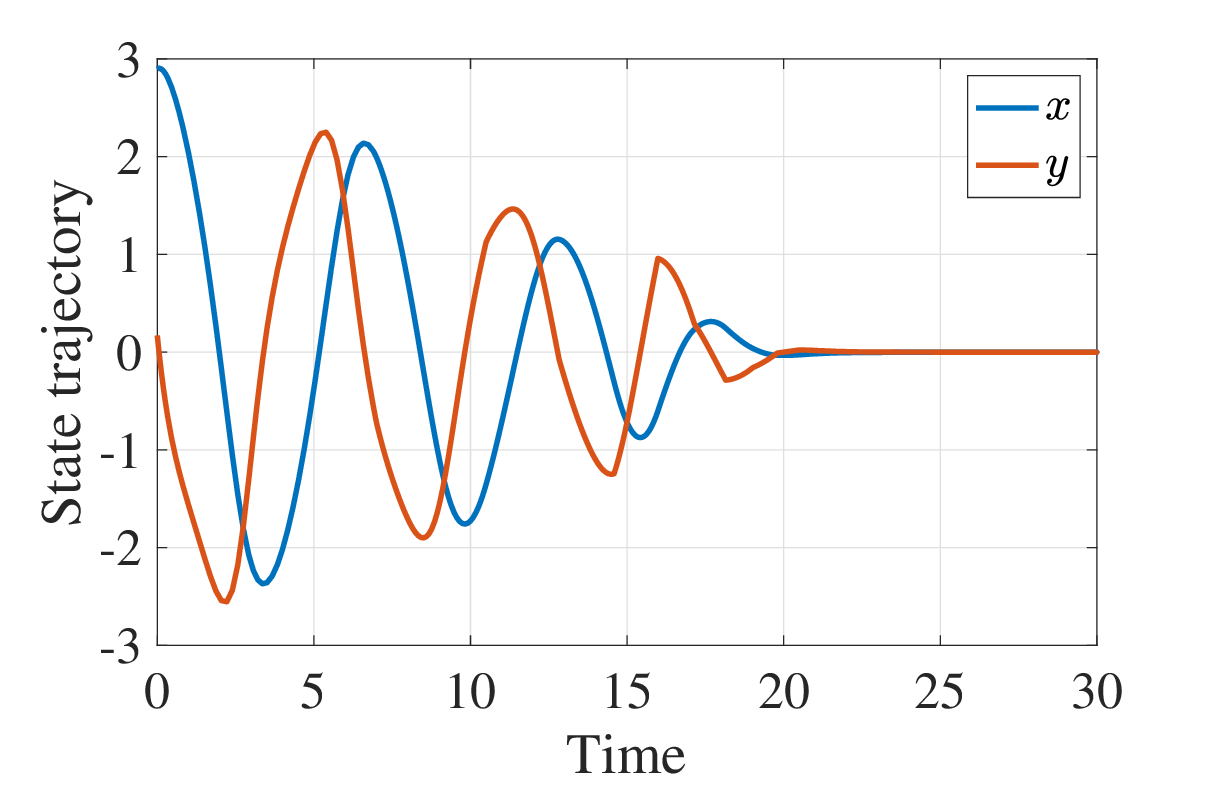}
	\caption{}
\end{subfigure}
\\
\begin{subfigure}{0.38\textwidth}
	\centering
	\includegraphics[width = \textwidth]{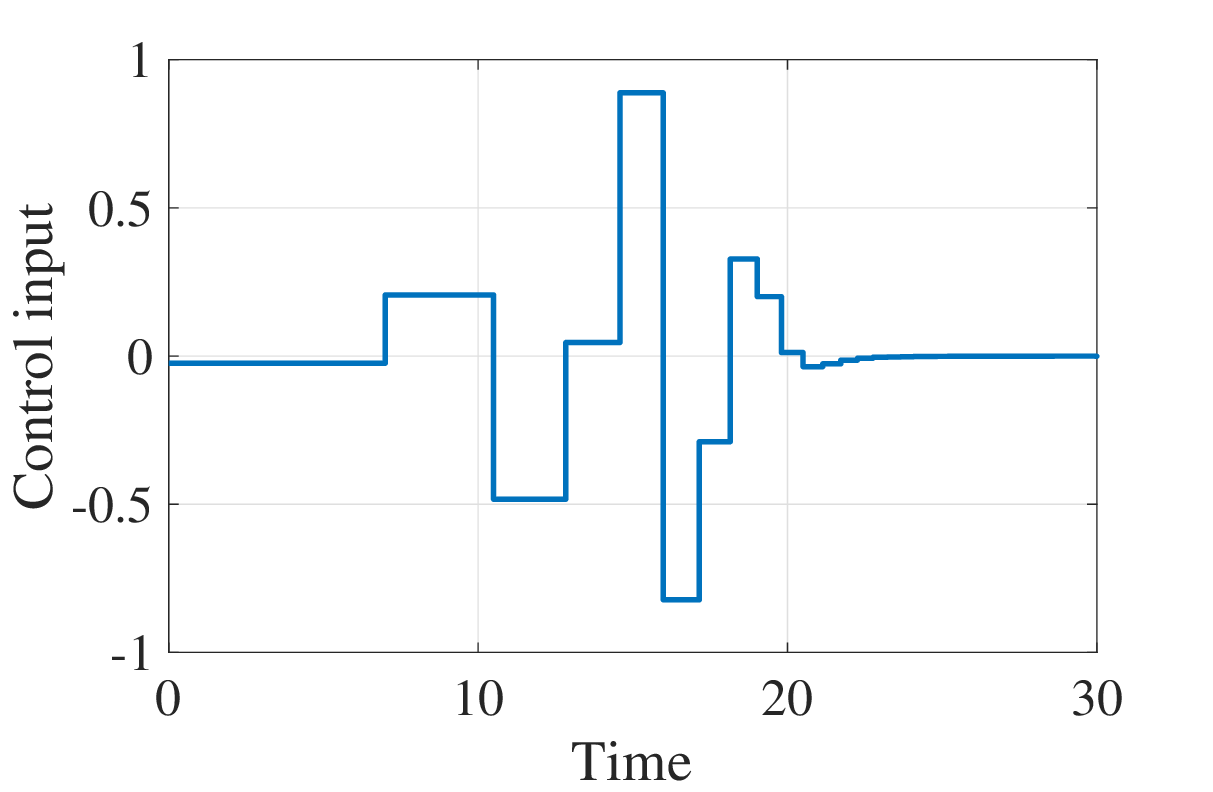}
	\caption{}
\end{subfigure}

\caption{Stabilizing the forced Van der Pol oscillator. (a) State trajectories, (b) Control input. Despite the assumptions in Section \ref{sec:Formulation} not being met, the target state is still achieved while satisfying the input constraint.}
\label{fig:VDP}

\end{figure}

We note that finding $D_f$ and $D_g$ in \eqref{eq:f_cond} and \eqref{eq:g_cond} is not straightforward for any non-trivial choice of $\X$, as the dynamics \eqref{eq:VDP} are not globally Lipschitz. Further, though the dynamics are linear in the input, we note that $g_0 = \begin{bmatrix} 0 \\ 1 \end{bmatrix}$ does not have full row rank, thus violating Assumption \ref{asm:ctrb}. Despite these limitations, this example illustrates that the procedure described in Section~\ref{sec:Method} can be used to stabilize the dynamics \eqref{eq:VDP}. From a randomly chosen initial state $x_0 = [2.90~ 0.17]^{\T}$, we seek to achieve the target state $x_{tg} = [0~ 0]^{\T}$ and set $t^* = 7$.

Fig.~\ref{fig:VDP} illustrates state trajectories and control inputs for this example with $\mu = 0.2$. Once again, it is clear that the target state is achieved while satisfying the input constraint, despite $g_0$ not having full row rank and the assumption of global Lipschitz continuity being violated. Further, $t^*$ is simply chosen as a design parameter, and may not satisfy condition \eqref{eq:t_cond}. In view of Remark \ref{rem:relax}, the additional error arising from violating Assumption \ref{asm:ctrb} decays to zero. However, we show in our next example that there may exist some non-zero steady-state error.

\begin{figure}[!t]
\centering
\begin{subfigure}{0.38\textwidth}
	\centering
	\includegraphics[width = \textwidth]{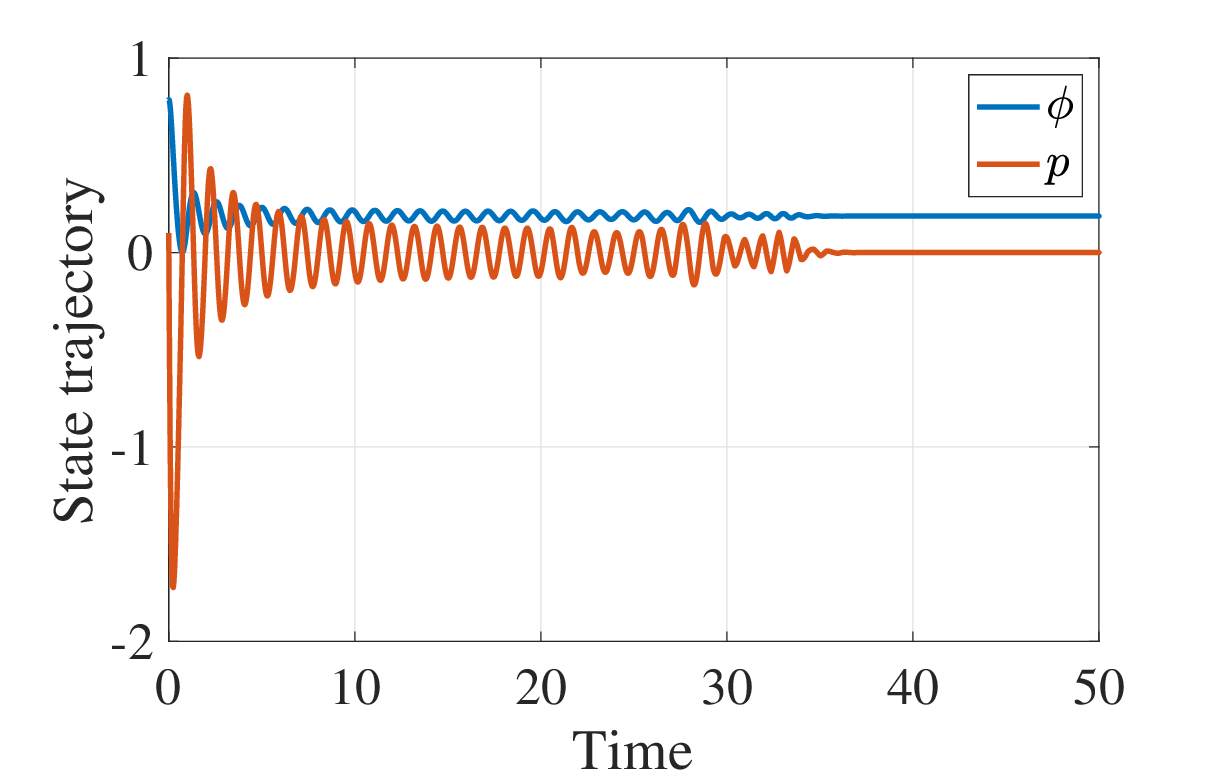}
	\caption{}
\end{subfigure}
\\
\begin{subfigure}{0.38\textwidth}
	\centering
	\includegraphics[width = \textwidth]{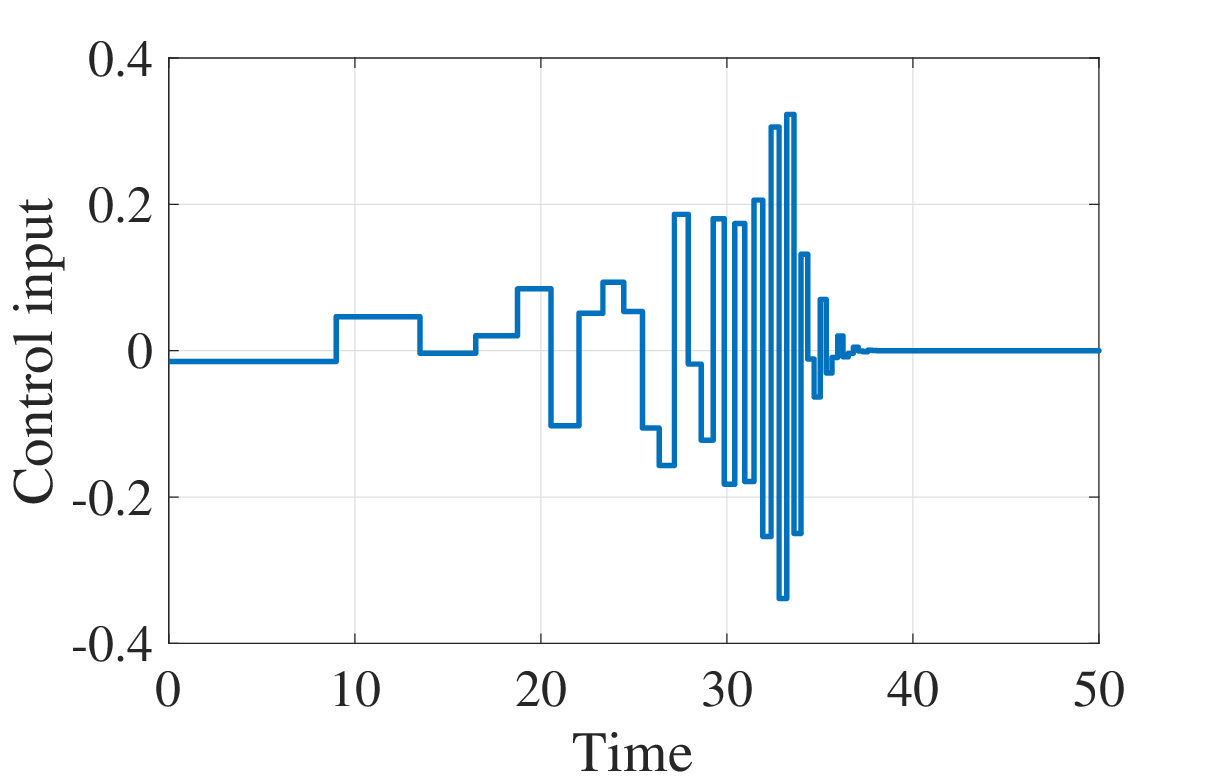}
	\caption{}
\end{subfigure}

\caption{Control of wing rock motion. (a) State trajectories, (b) Control input. The sequence of control inputs \eqref{eq:un} results in bounded error from the target state.}
\label{fig:WR}

\end{figure}

\subsection{Example 3: Wing rock motion}
In this example, we consider the problem of wing rock motion arising in high-performance, delta-winged aircraft at large angles of attack. This phenomenon results in self-induced oscillations of the roll angle due to asymmetric aerodynamic coefficients on the delta wing. The dynamics of this phenomenon can be written based on the formulation in \cite{MK96} as
\begin{align} \label{eq:WR}
\begin{split}
	\dot{\phi} &= p, \\
	\dot{p} &= c_1 + c_2\phi + c_3p + c_4|\phi|p + c_5|p|p + c_6u,
\end{split}
\end{align}
where $\phi$ is the roll angle, $p$ is the roll rate, $u$ is the aileron deflection, and
\begin{align*}
	&c_1 = 5; ~ c_2 = -26.67 \text{ s}^{-1}; ~ c_3 = 0.765 \text{ s}^{-1};\\
	&c_4 = -2.92 \text{ rad/s}; ~ c_5 = -2.5; ~ c_6 = 0.75
\end{align*}
are the aerodynamic coefficients when the angle of attack is $\pi/6 \text{ rad}$ \cite{MK96}. As with the previous example, finding $D_f$ and $D_g$ is not straightforward for non-trivial $\X$. Further, we have $g_0 = \begin{bmatrix} 0 \\ c_6 \end{bmatrix}$, which does not have full row rank. Thus, Assumption \ref{asm:ctrb} is not satisfied. Nevertheless, in line with the insights of Remark \ref{rem:relax}, we show that the procedure of Section \ref{sec:Method} results only in a finite, small error from the target state.

We set $x_0 = [\pi/4~ 0.1]^{\T}$, $x_{tg} = [0~ 0]^{\T}$ and $t^* = 9$. The state trajectories and control inputs in this example are shown in Fig.~\ref{fig:WR}. In contrast to the previous example, we note that the target state is not achieved as the roll angle $\phi$ settles to a non-zero value while the roll rate subsides to zero. This error may be quantified by following steps in Remark \ref{rem:relax}. As before, the chosen value of $t^*$ does not satisfy the second part of condition \eqref{eq:t_cond}, but the input constraint is still satisfied. We also note that the control inputs show significant oscillations from around $t = 20$. While our theory does not explain this behavior, these oscillations subside after around $t = 35$ and the states converge to their final values.

\begin{figure}[!t]
\centering
\begin{subfigure}{0.38\textwidth}
	\centering
	\includegraphics[width = \textwidth]{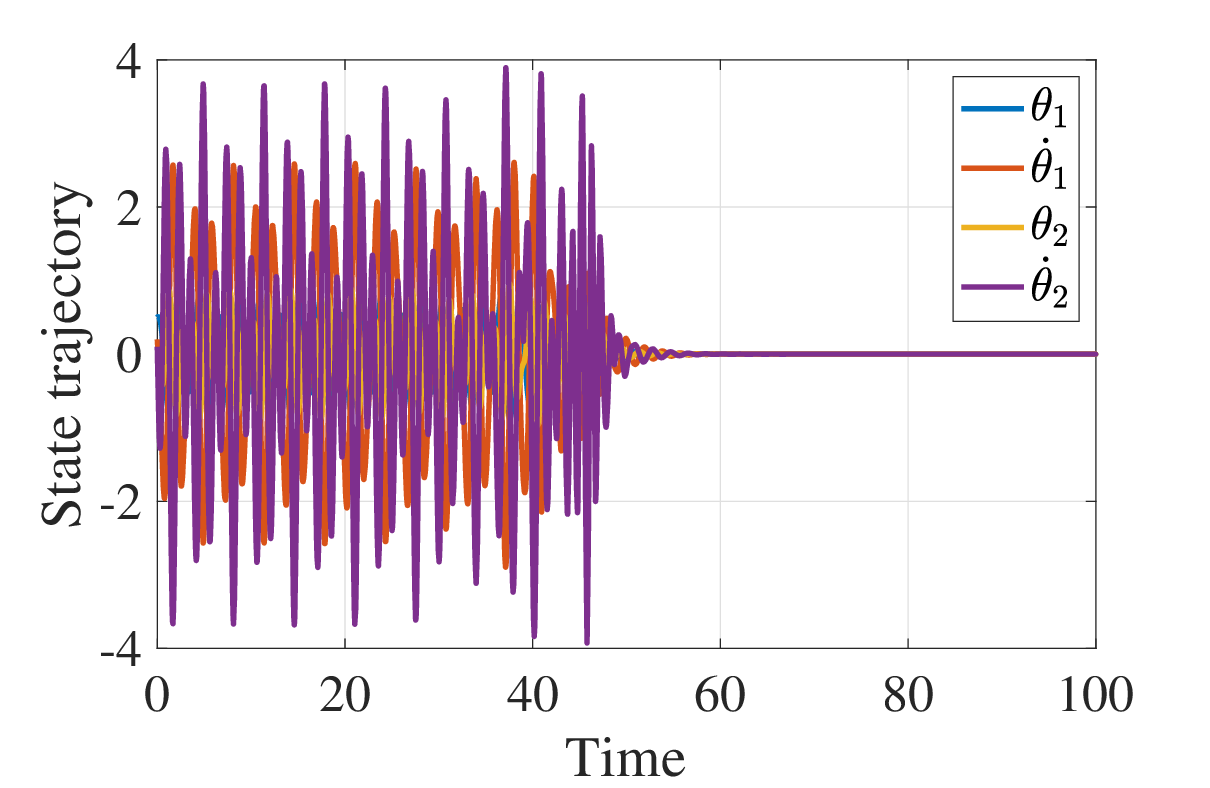}
	\caption{}
\end{subfigure}
\\
\begin{subfigure}{0.38\textwidth}
	\centering
	\includegraphics[width = \textwidth]{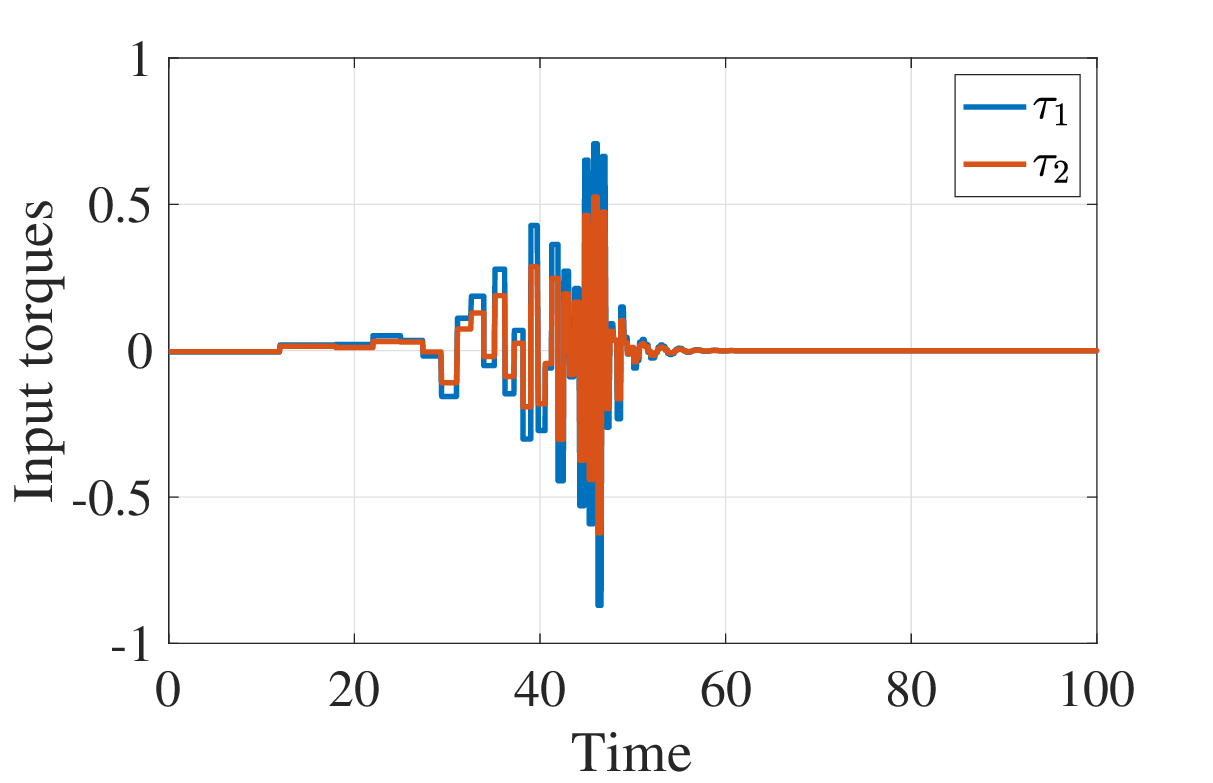}
	\caption{}
\end{subfigure}

\caption{Stabilizing the link positions and velocities of a two-link robot manipulator. (a) State trajectories, (b) Input torques. The target state is achieved despite the significant nonlinearities, though the performance in this example is highly sensitive to system parameters.}
\label{fig:RM}

\end{figure}

\subsection{Example 4: Two-link robot manipulator}
In our final example, we consider the following dynamics of a two-link robot manipulator, derived using the Euler-Lagrange formalism \cite{Nijmeijer}:
\begin{equation} \label{eq:RM}
	\Ddot{\theta} = -M(\theta)^{-1}C(\theta, \dot{\theta}) - M(\theta)^{-1}K(\theta) + M(\theta)^{-1}\tau,
\end{equation}
where
\begin{align*}
M(\theta) &= \begin{bmatrix} m_1l_{1}^{2} + m_2\left|l_1 + l_2\right|^2 & m_2\left(l_{2}^{2} + l_1l_2\cos\theta_2\right) \\ m_2\left(l_{2}^{2} + l_1l_2\cos\theta_2\right) & m_2l_{2}^{2} \end{bmatrix}, \\
C(\theta, \dot{\theta}) &= \begin{bmatrix} -m_2l_1l_2\left(\sin\theta_2\right)\dot{\theta}_2\left(2\dot{\theta}_1+\dot{\theta}_2\right) \\ m_2l_1l_2\left(\sin\theta_2\right)\dot{\theta}_{1}^{2} \end{bmatrix}, \\
K(\theta) &= \begin{bmatrix} \left(m_1+m_2\right)gl_1\sin\theta_1 + m_2gl_2\sin\left(\theta_1 + \theta_2\right) \\ m_2gl_2\sin\left(\theta_1 + \theta_2\right) \end{bmatrix}. 
\end{align*}
The vector $\theta = [\theta_1~ \theta_2]^{\T}$ consists of coordinates of positions of the two links and $\tau = [\tau_1~ \tau_2]^{\T}$ is the vector of input torques on both links. The respective lengths of the links are $l_1$ and $l_2$, their respective masses are $m_1$ and $m_2$ and $g$ is the acceleration due to gravity. In the inertia matrix $M(\theta)$, $\left|l_1+l_2\right|^2 = l_{1}^{2} + l_{2}^{2} + 2l_1l_2\cos\theta_2$. 

The state vector $x = [\theta_1~ \dot{\theta_1}~ \theta_2~ \dot{\theta}_2]^{\T}$, and since there are only two input torques, Assumption \ref{asm:ctrb} is violated again. Further, due to the structure of the matrices $M(\theta)$ and $C(\theta, \dot{\theta})$, finding $D_f$ and $D_g$ is not straightforward. We set $m_1 = 0.3, m_2 = 0.4, l_1 = 0.2, l_2 = 0.5$, and aim to stabilize the dynamics \eqref{eq:RM} to the origin from $x_0 = [0.5~ 0.2~ 0~ 0.1]^{\T}$ using $t^* = 12$.

The state trajectories and control inputs in this example are shown in Fig.~\ref{fig:RM}. Despite significant oscillations, the states eventually stabilize to the origin and the input constraint is satisfied. We note that this example is particularly sensitive to the choice of parameters, and slight changes in the initial condition or system parameters may cause the input constraint to be violated. As with the previous example, there are also significant oscillations in the input signal, though these eventually stabilize along with the system states.

\section{Discussion} \label{sec:Discussion}
In this paper, we presented an approach to control constrained Lipschitz nonlinear systems through a sequence of controllers designed for their linear driftless approximation at the initial state. We first showed that optimal controllers to achieve a target state for driftless dynamics lead to bounded error from the target in the nonlinear system. By applying these controllers over successively shorter time intervals, we showed that this error monotonically decreases and can be made arbitrarily small by considering sufficiently many intervals. We also derived conditions that ensure actuation constraints are satisfied. A set of case studies demonstrate the efficacy of this approach, with satisfactory performance in achieving a target state even when underlying assumptions are violated. In this section, we provide some additional remarks, discussing merits and drawbacks of this technique and present a number of avenues for future research.

We note that in comparison to methods such as feedback linearization or Jacobian linearization, our approach does not require complete knowledge of system dynamics. Instead, we use only the value of the input function $g$ at the initial state, and thus the controller uses a very simple approximation of the original system dynamics. This technique also allows developing conditions which ensure the input constraint is satisfied, while this is not guaranteed in linearization-based approaches. State constraints that limit how far $x(t)$ may be from $x_{tg}$ may be addressed by appropriately designing $t^*$, since $x_1$ is within $\vbar_1$ of $x_{tg}$, and each subsequent $x_n$ is even closer to $x_{tg}$ using Lemma~\ref{lem:vn}. Further, applying the sequence of controllers \eqref{eq:un} is less computationally intensive than more advanced techniques such as nonlinear model predictive control or neural network controllers.

Despite these advantages, our theoretical guarantees require assuming that the nonlinear dynamics are globally Lipschitz and the driftless dynamics are fully controllable. As we discuss in Remark \ref{rem:relax}, relaxing the latter assumption can still lead to satisfactory performance, though potentially resulting in some steady-state error. Relaxing the globally Lipschitz assumption leads to more difficulties, since each of the bounds $\vbar_n$ in \eqref{eq:vn_bound} themselves depend on the Lipschitz constants $D_f$ and $D_g$. Thus, restricting dynamics to be locally Lipschitz requires a careful characterization of the domain, and such an endeavor is outside the current scope of the paper. Nevertheless, the examples in Section \ref{sec:Examples} show that our approach can still result in satisfactory tracking performance when these assumptions are broken.

It is interesting to examine whether other driftless approximations will provide guarantees similar to \eqref{eq:vn_bound} and \eqref{eq:vn_rec}. In particular, the matrix $g_0$ is only one possible driftless approximation, and it is perhaps more natural to use $g(x_{tg})$ or $g(x_{eq})$ instead to construct \eqref{eq:Driftless}, where $x_{eq}$ is an equilibrium state of \eqref{eq:System}, assuming one exists. In this context, there exist a number of avenues to extend the techniques in this paper, and we discuss some of these below.

\begin{itemize}[label = $\bullet$, leftmargin = 1.2em]

\item \emph{Finite-horizon and resilient control:} While this paper focused on achieving a target state over an infinite horizon, the sequence of time instants $\{t_n\}$ can be suitably modified to achieve a target state in a prescribed finite time $t_f$. For instance, we may choose $t_n = t_f\left(\frac{a^n-1}{a^n}\right)$ for some $a > 1$ such that $\lim_{n \to \infty} t_n = t_f$ and $\lim_{n \to \infty} \Delta t_n = 0$. This sequence converges geometrically to $t_f$, and hence the corresponding sequence of inputs can be shown to have bounded energy, which is not true for \eqref{eq:un}. Such a property can be exploited to design resilient nonlinear systems in the context of \cite{BO22b, PO25b}, which we will describe in detail in a forthcoming paper.

\item \emph{Systems subject to bounded disturbances:} In the same spirit as relaxing Assumption \ref{asm:ctrb}, we note that nonlinear systems subject to bounded disturbances or noise might also be stabilized to a neighborhood of the target state using the sequence of inputs \eqref{eq:un}, with bounded steady-state error. The bound in \eqref{eq:vn_bound} may then be directly affected by the bound on the disturbance.

\item \emph{Online learning and control synthesis:} The work of Meng \emph{et al.} \cite{MSWO24} presents an online method to drive a nonlinear system to a target state. This is accomplished through the use of an underapproximated proxy system and synthesis of a control law by learning from a single trajectory run. In this context, we are currently developing methods that use the synthesis procedure in Section \ref{sec:Method} in combination with learning from a single trajectory run to develop new online controllers for Lipschitz nonlinear systems. It is worth noting that Farsi \emph{et al.} \cite{FLYL22} present a similar piecewise learning approach to control nonlinear systems.

\item \emph{Nonlinear consensus problems:} In a network of agents indexed by $i = 1, \ldots, n_A$, the Linear Consensus Protocol (LCP) \cite{ME10} can be formulated based on a set of communicating integrators with dynamics $\dot{x}_i = u_i$ where $u_i$ is designed to achieve \emph{consensus} among agents' values. These dynamics can be viewed as a linear driftless system that approximates the dynamics of a network of communicating agents with individual nonlinear dynamics of the form \eqref{eq:System}. On this basis, similar to the procedure in Section \ref{sec:Method}, we aim to investigate whether using the LCP over successively shorter time intervals can lead to consensus in a nonlinear multi-agent network. A key difficulty to address is whether an input constraint of the form \eqref{eq:SetU} is applicable in this context.
	

\end{itemize}


\balance
\bibliographystyle{plain}        
\bibliography{references}           

\end{document}